\newtheorem*{theorem*}{Theorem} 
\newtheorem{theorem}{Theorem}[section]
\newtheorem{lemma}[theorem]{Lemma}
\newtheorem{corollary}[theorem]{Corollary}
\newtheorem{definition}[theorem]{Definition}
\theoremstyle{definition}
\newtheorem{remark}[theorem]{Remark}
\newtheorem{example}[theorem]{Example}
\numberwithin{equation}{section}
\numberwithin{figure}{section}
\definecolor{Green}{rgb}{0, 0.65,0}
\begin{document}
		
\title{Boundary Regularity for the $\infty$-Heat Equation}
\author{Nikolai Ubostad}
\address{Department of Mathematical Sciences 
Norwegian University of Science and Technology 
N-7491 Trondheim, Norway}
\email{nikolai.ubostad@ntnu.no}

\begin{abstract}
We study the boundary regularity for the normalised $\infty$-heat equation $u_t =  \Delta_{\infty}^Nu$ in arbitrary domains. Perron's Method is used for constructing solutions. We characterize regular boundary points with barrier functions, and prove an Exterior Sphere condition. A Petrovsky criterion is established.
\end{abstract}
\maketitle
\section{Introduction}
\noindent The $p$-parabolic equation
\begin{equation}
\label{eq:p-parabolic}
u_t = \text{div}(|Du|^{p-2}Du),
\end{equation}
$1<p <\infty$,
has been extensively studied, see for example \cite{barenblatt} and \cite{dibenedetto2012degenerate}. The study of the Dirichlet problem on \emph{arbitrary} domains was initiated by Kilpeläinen and Lindqvist in \cite{kilpelainen1996dirichlet}, where the Perron Method was used to construct solutions.

Equations of this type are typically difficult to study in domains that are not space-time cylinders, i.e. in domains not of the form $Q \times (0, T) \subset \mathbb{R}^n\times (0, T)$.

In this paper, we study the \emph{normalised} $\infty$-parabolic equation, also called the \emph{normalised} $\infty$-heat equation,
\begin{equation}
\label{eq:dirichlet problem} 
\begin{cases}
u_t - \Delta_{\infty}^N u=0 \ &\text{in} \ \Omega, \\
u = f \ &\text{on} \ \partial \Omega,
\end{cases}
\end{equation}
where the normalised $\infty$-Laplace operator is defined by
\begin{equation}
\label{eq:infty laplace}
\Delta_{\infty}^Nu :=\bigg\langle D^2u\frac{Du}{|Du|},\frac{Du}{|Du|}\bigg\rangle = \frac{1}{|Du|^2} \sum_{i, j =1}^{n}u_{x_i}u_{x_j}u_{x_ix_j}.
\end{equation}
Note that if $u(x, t) =G(r, t)$, for a smooth function $G$ and $r=|x|$, \eqref{eq:infty laplace} reads $G_{rr}(r, t)$ when $x\neq 0$.

The initial value problem was studied by Juutinen and Kawohl in \cite{Juutinen2006}. They proved a comparison principle, and the existence of viscosity solutions in space-time cylinders using an approximation argument. As in the $p$-parabolic case, arbitrary domains are more challenging. Following Kilpeläinen and Lindqvist, we employ Perron's method to construct solutions in arbitrary domains $\Omega$ in $\mathbb{R}^n\times (-\infty, \infty)$. We shall prescribe boundary values on the whole boundary $\partial \Omega$ as if \eqref{eq:dirichlet problem} were an elliptic equation. The boundary values are in general not attained at all points.

These Perron solutions are always viscosity solutions of \eqref{eq:dirichlet problem} in $\Omega$. However, the values they takes on the boundary may not be the same as the desired boundary values $f(\zeta)$. Points satisfying 
\[
\lim_{\eta \to \zeta}u(\eta)=f(\zeta), \ \eta \in \Omega, \ \zeta \in \partial \Omega
\]
 for \emph{every} continuous boundary function $f$ are called \emph{regular points} of the boundary for the $\infty$-heat equation.  A point $\zeta$ of the boundary satisfies a \emph{barrier condition} if there exists a supersolution $w$ defined on the entire domain, such that $w(\zeta)=0$ and $w(\eta)>0$ for $\zeta \neq \eta$. In this case, $w$ is called a \emph{barrier function} in $\Omega$.

The stationary equation
\begin{equation}
\label{eq:stationaryeg}
\sum_{i, j =1}^{n}u_{x_i}u_{x_j}u_{x_ix_j}=0
\end{equation}
has an interesting story. It was first studied by Aronsson in \cite{aronsson} in connection with absolutely minimizing Lipchitz extensions of functions. Because of this property, all boundary points are regular for \eqref{eq:stationaryeg}.  Other applications are image processing, see  \cite{caselles}, and it arises in connection with stochastical games, cf.  \cite{peres}. We mention that there also is a game-theoretic interpretation of \eqref{eq:dirichlet problem}, see \cite{manfredi2010asymptotic}.

Our main results are as follows.

\begin{theorem}
\label{thm:barrier}
A boundary point $\zeta_0$ is regular for \eqref{eq:dirichlet problem} if and only if there exists a barrier at $\zeta_0,$
\end{theorem}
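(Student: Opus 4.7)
The plan is to prove the two implications separately by standard Perron-type arguments tailored to the normalised $\infty$-heat equation. Throughout I take for granted the ingredients the paper must set up beforehand: a comparison principle between viscosity sub- and supersolutions of \eqref{eq:dirichlet problem}, and the fact that Perron solutions are viscosity solutions in $\Omega$.

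For the \emph{sufficiency} of a barrier, let $w$ be a barrier at $\zeta_0$ and let $f\in C(\partial\Omega)$. Given $\varepsilon>0$, continuity of $f$ at $\zeta_0$ gives a neighbourhood $U$ of $\zeta_0$ with $|f-f(\zeta_0)|<\varepsilon$ on $U\cap\partial\Omega$, while on the compact complement the continuous function $w$ attains a positive minimum $m$. Since $\Delta_\infty^N$ is positively $1$-homogeneous in $u$, positive scalar multiples of $w$ remain supersolutions, as do their translates by constants; hence
\[
\varphi^\pm(\eta)\;:=\;f(\zeta_0)\pm\varepsilon\pm\frac{2\|f\|_\infty}{m}\,w(\eta)
\]
are super/subsolutions satisfying $\varphi^-\le f\le\varphi^+$ on $\partial\Omega$. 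Any Perron solution $u$ for $f$ is therefore sandwiched, $\varphi^-\le u\le \varphi^+$, and letting $\eta\to\zeta_0$ (so $w(\eta)\to 0$) and then $\varepsilon\to 0$ yields $u(\eta)\to f(\zeta_0)$.

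For the \emph{necessity}, I would build a barrier as the Perron solution of an explicit continuous datum that vanishes only at $\zeta_0$, for instance
\[
g(x,t):=\bigl(|x-x_0|^2+|t-t_0|\bigr)^{1/2},\qquad \zeta_0=(x_0,t_0),
\]
restricted to $\partial\Omega$. Writing $w$ for this Perron solution, comparison with the constants $0$ and $\max_{\partial\Omega}g$ gives $0\le w\le \max g$, while regularity at $\zeta_0$ yields $w(\eta)\to 0$ as $\eta\to\zeta_0$. It remains to show $w(\eta)>0$ for every $\eta\neq\zeta_0$, which is where I expect the main obstacle to lie.

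For interior positivity the clean tool would be a strong minimum principle for viscosity supersolutions of $u_t-\Delta_\infty^N u=0$; if none is directly available, I would instead slip an explicit radial subsolution of the form $G(|x-y|,t-s)$ underneath $w$ near each interior reference point $(y,s)$, exploiting the fact that on radial profiles $\Delta_\infty^N$ collapses to $G_{rr}$ so that constructing strict subsolutions is elementary. At boundary points $\zeta\neq\zeta_0$ one must additionally cope with the parabolic feature that boundary values need not be attained everywhere and with the possibility that $\zeta$ is itself irregular; here I would rerun the sufficiency argument at $\zeta$ against a second auxiliary datum to propagate the strict positivity of $g$ down to $w$. This last step, navigating the non-cylindrical geometry of $\Omega$ and the orientation of its parabolic boundary, is the delicate point of the proof.
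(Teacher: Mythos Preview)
Your sufficiency argument is correct and matches the paper's almost verbatim.

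For necessity you have the right overall strategy---take the lower Perron solution of a nonnegative continuous datum vanishing only at $\zeta_0$---but you are missing the one idea that makes the argument short. The paper chooses the datum to be itself a viscosity \emph{subsolution} in all of $\Omega$: concretely $\Psi(x,t)=|x-x_0|^2+\epsilon(t-t_0)^2$ with $0<\epsilon\cdot\operatorname{diam}(\Omega)<1$, for which a one-line computation gives $\Psi_t-\Delta_\infty^N\Psi=2\epsilon(t-t_0)-2<0$ (and the case $x=x_0$ is handled as in the exterior-sphere argument). Since $\Psi\in\mathcal{L}_\Psi$, the definition of the lower solution yields $w:=\underline{H}_\Psi\ge\Psi$ throughout $\Omega$, and $\Psi$ is strictly positive on $\overline{\Omega}\setminus\{\zeta_0\}$. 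This single inequality delivers both interior positivity and the boundary condition $\liminf_{\eta\to\zeta}w(\eta)\ge\Psi(\zeta)>0$ at every $\zeta\in\partial\Omega\setminus\{\zeta_0\}$, regular or not. No strong minimum principle, no auxiliary radial subsolutions, no special treatment of irregular boundary points is needed.

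Your chosen datum $g(x,t)=(|x-x_0|^2+|t-t_0|)^{1/2}$ is \emph{not} a subsolution---for $t>t_0$ a direct computation gives $g_t-\Delta_\infty^N g=\tfrac{|x-x_0|^2-(t-t_0)}{2(|x-x_0|^2+(t-t_0))^{3/2}}$, which changes sign---so this shortcut is unavailable to you and you are forced into the complications you describe. In particular, your plan to ``rerun the sufficiency argument at $\zeta$'' to handle an irregular boundary point $\zeta\neq\zeta_0$ is circular: the sufficiency direction presupposes a barrier at $\zeta$, which is precisely what you lack when $\zeta$ is irregular. Replacing $g$ by a datum that is already a subsolution closes the gap immediately.
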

This is a criterion for regular points on the boundary. Using this, we prove that every point on the parabolic boundary of cylinders are regular:
\begin{theorem}
\label{thm:regularcylinders}
Let $Q \subset \mathbb{R}^n$, and let $Q_T= Q \times (0, T)$. Assume $0<t_0<T$ and $x_0 \in \partial Q$. Then $\zeta_0 =(x_0, t_0)$ is a regular point of $\partial_p Q_T$.
\end{theorem}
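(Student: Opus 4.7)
The plan is to invoke the characterization of regular points in Theorem~\ref{thm:barrier}: it suffices to produce a barrier $w$ at $\zeta_0$. The idea is to add a strictly $\infty$-superharmonic spatial function that vanishes at $x_0$ to a small quadratic time perturbation pinning $t_0$ as the unique zero. Strict concavity of the spatial piece will be essential, because it produces a uniformly negative $\Delta_\infty^N w$ that can absorb the unfavorable sign of $w_t$ on the past $\{t<t_0\}$.

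My candidate is
\[
w(x,t) \;=\; 1 - e^{-|x-x_0|} \,+\, \alpha\,(t-t_0)^2,
\]
with $\alpha>0$ to be chosen. Since $x_0 \in \partial Q$ and $Q$ is open, $|x-x_0|>0$ on $Q_T$, so $w \in C^\infty(Q_T)$ and the spatial gradient never vanishes there. Writing $r=|x-x_0|$, the observation following \eqref{eq:infty laplace} yields $\Delta_\infty^N w = \partial_r^2(1-e^{-r}) = -e^{-r}$, while $w_t = 2\alpha(t-t_0)$, whence
\[
w_t - \Delta_\infty^N w \;=\; 2\alpha(t-t_0) + e^{-|x-x_0|}.
\]
On a parabolic neighborhood $U$ of $\zeta_0$ on which $|x-x_0| \le R$ and $|t-t_0|\le T$, the choice $\alpha \le e^{-R}/(2T)$ makes the right-hand side non-negative throughout $U\cap Q_T$, so $w$ is a classical (and hence viscosity) supersolution of the normalised $\infty$-heat equation there. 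Moreover $w\ge 0$ with $w(\zeta_0)=0$ and $w>0$ on $\overline{Q_T}\setminus\{\zeta_0\}$, because the two non-negative summands vanish only simultaneously at $\zeta_0$. Thus $w$ is a barrier, and Theorem~\ref{thm:barrier} delivers the regularity of $\zeta_0$.

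The main technical obstacle I expect concerns globalization of the barrier when $Q$ is unbounded, since the uniform lower bound $e^{-|x-x_0|}\ge e^{-R}$ fails outside a large ball and the inequality $w_t-\Delta_\infty^N w\ge 0$ may break down far from $\zeta_0$. In the bounded case the construction is already global as written. In general one either restricts to a parabolic neighborhood of $\zeta_0$ (since only the behavior there is relevant for the Perron-method estimate) or truncates $w$ away from $\zeta_0$, for instance by replacing it with $\min(w, M)$ for a suitable constant $M$, which preserves both the supersolution property and the strict positivity outside $\zeta_0$.
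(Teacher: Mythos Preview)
Your argument is correct and is genuinely different from (and more elementary than) the paper's. Since the paper restricts to bounded domains, $Q$ is bounded; taking $R=\operatorname{diam}(\overline{Q})$ and using $|t-t_0|<T$, your choice $\alpha\le e^{-R}/(2T)$ makes $w$ a classical supersolution on all of $Q_T$, so no localization or truncation is actually needed. The three barrier conditions follow exactly as you say, because $x_0\notin Q$ forces $|x-x_0|>0$ throughout $Q_T$, hence $Dw\neq 0$ and $w>0$ there, while $w$ extends continuously to $\overline{Q_T}$ with the unique zero at $\zeta_0$.

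By contrast, the paper does not construct an explicit global barrier. It first argues that every $x_0\in\partial Q$ is regular for the stationary $\infty$-Laplace equation (via the $p$-Laplace approximation and the explicit sub/supersolutions $f(x_0)\pm\epsilon\pm\lambda|x-x_0|^{(p-n)/(p-1)}$), then solves the inhomogeneous problem $\Delta_\infty^N\nu=-1$ in $Q$ with data $|x-x_0|$ on $\partial Q$ to obtain a spatial barrier $\nu$, and finally sets $v(x,t)=\nu(x)+(t_0-t)$, which solves $v_t-\Delta_\infty^N v=0$ and is a barrier in $Q\times(0,t_0)$; regularity in $Q_T$ then follows from Theorem~\ref{thm:future} (``the future cannot influence the present''). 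Your route avoids all three of these ingredients---the stationary regularity theory, the inhomogeneous solvability result from \cite{lu2008inhomogeneous}, and Theorem~\ref{thm:future}---at the price of being specific to the normalised operator (the strict radial concavity $\partial_r^2(1-e^{-r})=-e^{-r}$ is precisely what makes $\Delta_\infty^N w$ uniformly negative). The paper's approach, on the other hand, highlights the structural link between the parabolic problem and the elliptic one, and would transfer to any spatial operator for which the corresponding inhomogeneous Dirichlet problem is solvable with continuous boundary data.
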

 We also have the following Petrovsky condition
 \begin{theorem}
 \label{thm:petrowski}
 The origin $(x, t)=(0, 0)$ is a regular point for \eqref{eq:dirichlet problem} in the domain enclosed by the hypersurfaces 
\begin{equation*}
\{(x, t)\in \mathbb{R}^n\times (0, \infty)\ : \ |x|^2 =-4t\log|\log|t||\} \ \text{and} \ \{t=-c\}, 
\end{equation*}
for $0<c<1$.
 \end{theorem}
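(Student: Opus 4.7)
The plan is to apply the barrier criterion of Theorem~\ref{thm:barrier}: I will exhibit a supersolution $w$ of the $\infty$-heat equation on $\Omega$ with $w(0,0)=0$ and $w>0$ on $\overline{\Omega}\setminus\{(0,0)\}$.

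Because the domain is rotationally symmetric in $x$, I look for a radial candidate $w(x,t)=G(|x|,t)$. For a smooth radial $u(x,t)=G(r,t)$ the remark following \eqref{eq:infty laplace} gives $\Delta_\infty^N u = G_{rr}$ at every point with $x\neq 0$; at $x=0$ the gradient vanishes, but a radial smooth function has an isotropic Hessian there, so the viscosity-envelope prescription again produces $G_{rr}(0,t)$. The supersolution condition $u_t-\Delta_\infty^N u\geq 0$ therefore reduces, on radial functions, to the one-dimensional heat supersolution inequality $G_t-G_{rr}\geq 0$. In this way the task becomes exactly the classical one-dimensional Petrovsky problem on the cusp $\{(r,t):0\leq r^2<-4t\log|\log(-t)|,\; -c<t<0\}$.

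I would then carry out the classical Petrovsky construction. The basic building block is the backward heat kernel
$$V(r,t)=(-t)^{-1/2}e^{-r^2/(4(-t))},$$
which satisfies $V_t+V_{rr}=0$ and hence
$$V_t-V_{rr}=-2V_{rr}=\frac{e^{-r^2/(4(-t))}}{(-t)^{3/2}}\left(1-\frac{r^2}{2(-t)}\right).$$
On the lateral boundary one has $r^2/(2(-t))=2\log|\log(-t)|\to\infty$, so $V$ is strictly a subsolution in the outer part of the cusp and strictly a supersolution in a thin neighbourhood of the $t$-axis. The barrier is then sought in the form
$$w(r,t)=\Phi(-t)-\Psi(-t)\,V(r,t),$$
with positive functions $\Phi,\Psi$ of $s=-t$ vanishing as $s\to 0$, tuned so that (i) $w_t-w_{rr}\geq 0$ throughout $\Omega$, (ii) $w(x,t)\to 0$ as $(x,t)\to(0,0)$ along every approach in $\overline{\Omega}$, and (iii) $w>0$ on $\overline{\Omega}\setminus\{(0,0)\}$. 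The admissible choices involve fractional powers of $-t$ together with iterated-logarithm factors; the key observation is that on the lateral boundary $e^{-r^2/(4(-t))}=|\log(-t)|^{-1}$, so $\Psi$ must carry a matching $|\log(-t)|$ factor for $w$ to stay positive there while still tending to $0$ at the origin.

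The main obstacle is precisely this delicate calibration, which is what makes the Petrovsky example borderline: the cusp width $\sqrt{-4t\log|\log(-t)|}$ is exactly the critical rate, so any slight enlargement of the domain would force the construction to break down. Matching the exponential decay of $V$ on the lateral boundary with the logarithmic factor in $\Psi$ while simultaneously preserving the supersolution inequality in the transition region $r^2\approx -2t$ (where $V_t-V_{rr}$ changes sign) requires the iterated-logarithm structure of the boundary curve to enter the barrier explicitly. Once (i)–(iii) are verified, the radial extension $w(x,t)=G(|x|,t)$ is a supersolution of the $\infty$-heat equation on $\Omega$ vanishing only at the origin, and Theorem~\ref{thm:barrier} gives the regularity of $(0,0)$.
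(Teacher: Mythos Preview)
Your strategy is the same as the paper's: apply Theorem~\ref{thm:barrier} via an explicit radial barrier, exploiting that for radial $u$ the equation reduces to the one-dimensional heat operator. The reduction is correct, and your treatment of the case $x=0$ is morally right, though the paper spells out the comparison with the test function $\phi$ using that $D^2\overline{u}(0,t)$ is a scalar multiple of the identity.

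The genuine gap is that you stop at the point you yourself label ``the main obstacle'': you never choose $\Phi$ and $\Psi$, nor verify the supersolution inequality across the transition region. That calibration \emph{is} the proof. The paper resolves it with the explicit ansatz
\[
\overline{u}(x,t)=-\tfrac{1}{2}\,|\log|t||^{-(\delta+1)}\,e^{-|x|^2/(4t)}+|\log|t||^{-\delta},\qquad \delta=\tfrac14,
\]
and checks by direct computation that $\overline{u}_t-\Delta_\infty^N\overline{u}\geq 0$, that $\overline{u}>0$ in the domain (using $-|x|^2/(4t)<\log|\log|t||$), and that $\overline{u}\to 0$ at the origin. Note also a structural difference between your sketch and the paper's barrier: your building block $V=(-t)^{-1/2}e^{-r^2/(4(-t))}$ \emph{decays} in $r$, whereas the paper's factor $e^{-|x|^2/(4t)}$ (with $t<0$) \emph{grows} in $|x|$, carried by a negative coefficient. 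The growing choice makes the positivity and boundary checks almost immediate, since on the lateral surface $e^{-|x|^2/(4t)}=|\log|t||$ exactly cancels one logarithmic power, leaving $\overline{u}=\tfrac12|\log|t||^{-\delta}>0$. With your decaying $V$ the bookkeeping is less transparent and you would still have to produce concrete $\Phi,\Psi$; until you do, the argument is incomplete.
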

 Note that this result is analogous to the 1-dimensional heat equation, see \cite{petrovsky1935ersten}. However, Theorem \ref{thm:petrowski} is completely independent of  the number of spatial variables, in contrast to the same result for the heat equation. 
 
  Theorem \ref{thm:petrowski} is in some sense sharp:
 \begin{theorem}
  \label{thm:not regular}
  The origin is \emph{not} a regular point for the domain defined by $|x|^2 =-4(1+\epsilon)t\log{|\log{|t|}|}$ for any $\epsilon >0$.
  \end{theorem}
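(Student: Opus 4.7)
The plan is to reduce the problem to the classical heat equation via radial symmetry, and then invoke Petrovsky's classical non-regularity theorem for the $1$-dimensional heat equation on the critical tusk. The key observation, noted just after \eqref{eq:infty laplace}, is that for a $C^2$ radial function $v(x,t) = G(|x|,t)$ with $G$ smooth and even in $r$, one has $\Delta_{\infty}^N v = G_{rr}$ away from the axis, while at $x=0$ the even symmetry forces $Dv(0,t) = 0$ and $D^2v(0,t) = G_{rr}(0,t)\,I$, so that the viscosity value of $\Delta_{\infty}^N v(0,t)$ still equals $G_{rr}(0,t)$. Consequently, radial $\infty$-caloric functions on the $n$-dimensional tusk $\Omega$ correspond bijectively to $1$-D caloric functions on the $2$-D tusk
\[
D_\epsilon \,=\, \{(r,t):\ 0 \le r < \phi(-t),\ -c < t < 0\}, \qquad \phi(s) = 2\sqrt{(1+\epsilon)\,s\,\log|\log s|}.
\]

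Since both $\Omega$ and the equation in \eqref{eq:dirichlet problem} are rotation invariant in $x$, for any continuous radial boundary datum $f(x,t) = F(|x|,t)$ the Perron solution $H_f$ is itself radial, and the correspondence above identifies $H_f(x,t) = u_F(|x|,t)$, where $u_F$ is the Perron solution of the classical heat equation on $D_\epsilon$ with data $F$. Hence the non-regularity of $(0,0)$ for \eqref{eq:dirichlet problem} on $\Omega$ reduces to the non-regularity of $(0,0)$ for the $1$-D heat equation on $D_\epsilon$, which is exactly Petrovsky's theorem \cite{petrovsky1935ersten}: for every $\epsilon > 0$ one can exhibit continuous $F$ on $\partial D_\epsilon$ (for instance $F \equiv 1$ on the base $\{t=-c\}$ and $F \equiv 0$ on the lateral curve $r=\phi(-t)$, continuously interpolated at the corner) for which $\liminf_{t \to 0^-} u_F(0,t) \geq \delta > 0 = F(0,0)$. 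Setting $f(x,t) = F(|x|,t)$ on $\partial \Omega$ then supplies continuous boundary data for which the $\infty$-heat Perron solution fails the regularity limit at the origin, contradicting regularity via Theorem \ref{thm:barrier}.

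The main obstacle is a clean verification of the radial correspondence at $x = 0$ in the viscosity framework used by the paper's Perron method---that is, that the radial lift of a $1$-D heat sub/supersolution is a sub/supersolution of \eqref{eq:dirichlet problem} at the axis. This is essentially straightforward because any smooth radial test function at $x=0$ has Hessian of the form $\lambda I$, on which the normalised $\infty$-Laplace operator collapses to its single eigenvalue. Beyond this reduction the $\infty$-heat equation plays no role, and the sharpness of the $(1+\epsilon)$-factor---together with the fact that Theorem \ref{thm:not regular} is independent of the spatial dimension $n$, just like Theorem \ref{thm:petrowski}---is dictated entirely by the classical $1$-D heat equation.
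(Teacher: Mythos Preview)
Your radial reduction is the right idea and is in fact what underlies the paper's own proof: the explicit subsolution $v(x,t)=f(t)e^{-k|x|^{2}/(4t)}+g(t)$ constructed there is radial, and computing $v_t-\Delta_\infty^N v$ is literally computing $G_t-G_{rr}$ for the profile $G(r,t)$. The paper simply rebuilds a Petrovsky-type one-dimensional heat subsolution by hand rather than citing \cite{petrovsky1935ersten}, then restricts to a sublevel domain $\tilde\Omega$ and invokes Corollary~\ref{cor:irregular}.

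The gap in your sketch is the asserted identity $H_f(x,t)=u_F(|x|,t)$. Rotation invariance does force $\overline H_f$ and $\underline H_f$ to be radial, but equating them with the one-dimensional Perron solution would require lifting \emph{arbitrary} semicontinuous $1$-D viscosity supersolutions to $n$-dimensional supersolutions of \eqref{eq:dirichlet problem}, and conversely passing from non-radial $n$-D supersolutions back to $1$-D ones; you only discuss smooth radial functions. Two smaller slips: your axis argument is misphrased, since the test functions in Definition~\ref{def:viscositydefinition} are arbitrary $C^2$, not radial---the correct observation is that the \emph{lifted function itself} has $D^2v(0,t)=G_{rr}(0,t)I$, which then bounds $\lambda(D^2\phi)$ or $\Lambda(D^2\phi)$ for any touching $\phi$---and the half-tusk $\{0\le r<\phi(-t)\}$ carries an artificial boundary at $r=0$; the clean correspondence is with the symmetric tusk $\{|r|<\phi(-t)\}$ and even data, which is where Petrovsky's theorem actually lives. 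The fix is to bypass the Perron identity altogether: lift one explicit smooth Petrovsky subsolution $V$, verify the viscosity inequalities directly at $x\ne0$ and at $x=0$ (exactly the two cases the paper checks), pass to the sublevel domain $\tilde\Omega$ so that $v$ matches continuous boundary data there, and finish via comparison and Corollary~\ref{cor:irregular}. Once you do this, your argument coincides with the paper's, with the explicit construction replaced by a citation.
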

These results are very similar to the theorems for the $p$-parabolic equation in \cite{Lindqvist1995}. Our Petrovsky condition is somewhat easier, due to the similarity of \eqref{eq:dirichlet problem} and the heat equation.

We limit our presentation to bounded domains and continuous boundary functions.

Our work is structured as follows. Section \ref{sec:existence} contains the existence and comparison results, while Section \ref{sec:perron} introduces the Perron method for viscosity solutions of the equation. In Section \ref{sec:barrier} we introduce the concept of barrier functions, and we provide an Exterior Sphere condition. We also prove that the future cannot influence the present. A brief discussion of the so-called Infinity Heat Balls is also provided. The proof of Theorem \ref{thm:regularcylinders} is provided in Section \ref{sec:regularcylinders}. A Petrovsky criterion is established in Section \ref{sec:petrowski}. Appendix \ref{sec:appa} contains a brief discussion of the boundary regularity for the related equation
\begin{equation*}
u_t = \sum_{i, j =1}^{n}u_{x_i}u_{x_j}u_{x_ix_j},
\end{equation*}
studied by Crandall and Wang in \cite{crandall2003another}. In this case we do not know if it suffices with a single barrier. We prove that if there exists a \emph{family} of barriers at $\zeta_0$, then $\zeta_0$ is a regular point.

\subsection{Notation}

In what follows $\Omega$ is an \emph{arbitrary} bounded domain in $\mathbb{R}^n \times (-\infty, \infty)$. $Q_T$ is a space-time cylinder: $Q_T = Q\times (0, T)$, $\partial \Omega$ is the Euclidean boundary of $\Omega$ and $\partial_pQ_T$ is the $\emph{parabolic}$ boundary of $Q_T$, i.e. $(\overline{Q}\times \{0\}) \cup (\partial Q\times (0, T])$. (It consists of the "bottom" and the sides of the cylinder. The top is excluded.) $\zeta, \eta \in \mathbb{R}^n \times \mathbb{R}$ are points in space-time, that is $\zeta =(x, t)$.

$Du$ denotes the gradient with respect to the spatial coordinates $x$ of $u$, and $D^2u$ is the spatial Hessian matrix of $u$.

 The space of symmetric $n\times n$ matrices is denoted by $\mathbb{S}^n$. The diameter of a set $\Omega \subset \mathbb{R}^{n+1}$ is defined by
\[
\text{diam}(\Omega) = \sup_{\zeta, \eta \in \Omega}|\zeta-\eta|.
\] 
The space of lower  semicontinuous functions from $\Omega$ to \\ $\mathbb{R}\cup \{\infty\}$ is denoted by $\operatorname{LSC}(\Omega)$, while $\operatorname{USC}(\Omega)$ are the upper semicontinuous ones.
\section{Existence and Comparison}
\label{sec:existence}
\noindent We seek to construct solutions to the $\infty$- heat equation
\begin{equation*}
\begin{cases}
u_t - \Delta_{\infty}^N u=0 \ &\text{in} \ \Omega, \\
u = f \ &\text{on} \ \partial \Omega,
\end{cases}
\end{equation*}
using Perron's method. The following definitions are taken from \\ \cite{dibenedetto1995}:
\begin{definition}
	\label{def:viscositydefinition}
We say that $u \in \operatorname{USC}(\Omega)$ is a \emph{viscosity subsolution} to \eqref{eq:dirichlet problem} if, for every function $\psi \in C^2(\Omega)$ such that $u-\psi$ has a maximum at $\zeta_0$, we have
\begin{equation*}
\begin{cases}
\psi_t(\zeta_0)-\Delta_{\infty}^N \psi(\zeta_0)  \leq 0, \ \text{for} \ D\psi(\zeta_0) \neq 0 \\
\psi_t(\zeta_0) -\Lambda(D^2\psi(\zeta_0)) \leq 0, \ \text{for} \ D\psi(\zeta_0) =0.
\end{cases}
\end{equation*}
Likewise, $v \in \operatorname{LSC}(\Omega) $ is a \emph{viscosity supersolution} in $\Omega$ if, for every $\psi \in C^2(\Omega)$ such that $v-\psi$ has a minimum at $\zeta_0$, we have
\begin{equation*}
\begin{cases}
\psi_t(\zeta_0)-\Delta_{\infty}^N \psi(\zeta_0)  \geq 0, \ \text{for} \ D\psi(\zeta_0) \neq 0 \\
\psi_t(\zeta_0) -\lambda(D^2\psi(\zeta_0)) \geq 0, \ \text{for} \ D\psi(\zeta_0) =0,
\end{cases}
\end{equation*}
where $\lambda(D^2\psi(\zeta_0)), \Lambda(D^2\psi(\zeta_0))$ is the minimal and maximal eigenvalue respectively of the Hessian matrix $D^2\psi(\zeta_0)$.
A function that is both a viscosity sub- and supersolution is a \emph{viscosity solution}.
\end{definition}
\begin{remark}
	An important result in \cite{crandall1992user} is that if a function $u\in C^2(\Omega)$ is such that $Du \neq 0$ and $u_t-\Delta^N_{\infty}u=0$ holds pointwise, then $u$ is a viscosity solution of \eqref{eq:dirichlet problem}. Hence we can calculate $u_t-\Delta^N_{\infty}u$ for concrete $C^2$- functions to verify that they are valid viscosity solutions. If $Du= 0$, more care is needed.
\end{remark}
Equivalently, we can characterize viscosity solutions in terms of the parabolic semi-jets.
\begin{definition}
The parabolic upper semi-jet of $u$ at $(x, t)$ is denoted by $\mathcal{P}^{2, +}_{\Omega}u(x, t)$ and is the set of all $(a, p, X) \in \mathbb{R}\times \mathbb{R}^n\times\mathbb{S}^n$ so that
\begin{equation*}
u(x, t) \leq a(t-s) +\langle p, x-y \rangle+\frac{1}{2}\langle X(x-y), x-y \rangle + o((t-s)+|x-y|^2),
\end{equation*}
as $(y, s) \to (x, t)$. Furthermore, $\mathcal{P}^{2, -}_{\Omega}u(t, x)= -\mathcal{P}^{2, +}_{\Omega}(-u(x, t))$.
\end{definition}
One can then prove the following equivalent definition, see Chapter 8 of \cite{crandall1992user}:
\begin{theorem}
\label{thm:equivalentdef}
A viscosity subsolution of \eqref{eq:dirichlet problem} is a function $u \in \operatorname{USC(\Omega)}$ so that
\begin{equation*}
\begin{cases}
a-\bigg\langle X\frac{p}{|p|},\frac{p}{|p|}\bigg\rangle \leq 0 , \ &\text{if} \ p \neq 0 \\
a-\Lambda(X) \leq 0 \ &\text{if} \ p=0,
\end{cases}
\end{equation*}
for $(a, p, X) \in \mathcal{P}^{2, +}_{\Omega}u(x, t)$. A similar result holds for viscosity supersolutions.
\end{theorem}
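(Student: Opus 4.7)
The plan is to establish the equivalence of the test-function characterisation in Definition~\ref{def:viscositydefinition} with the semi-jet characterisation stated here. I focus on the subsolution case; the supersolution case follows by applying the subsolution argument to $-u$ together with the identity $\mathcal{P}^{2,-}_\Omega u=-\mathcal{P}^{2,+}_\Omega(-u)$.

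The easy direction is semi-jet $\Rightarrow$ test-function, which is a Taylor expansion. Suppose $u$ satisfies the semi-jet inequality and let $\psi\in C^2(\Omega)$ be such that $u-\psi$ attains a local maximum at $\zeta_0=(x_0,t_0)$; after subtracting a constant we may assume $u(\zeta_0)=\psi(\zeta_0)$. A second-order Taylor expansion of $\psi$ about $\zeta_0$ combined with $u\le\psi$ locally shows that
\begin{equation*}
(\psi_t(\zeta_0),\,D\psi(\zeta_0),\,D^2\psi(\zeta_0))\in\mathcal{P}^{2,+}_\Omega u(\zeta_0).
\end{equation*}
Applying the semi-jet inequality to this triple recovers the inequality of Definition~\ref{def:viscositydefinition}, in both the $D\psi\ne 0$ and $D\psi=0$ branches.

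For the converse, given $(a,p,X)\in\mathcal{P}^{2,+}_\Omega u(\zeta_0)$, I would produce a $C^2$ test function $\psi$ for which $u-\psi$ has a local maximum at $\zeta_0$ and whose time derivative, gradient, and spatial Hessian at $\zeta_0$ are exactly $(a,p,X)$; the test-function inequality for $\psi$ then collapses to the jet inequality. Writing
\begin{equation*}
P(y,s):=u(\zeta_0)+a(s-t_0)+\langle p,y-x_0\rangle+\tfrac{1}{2}\langle X(y-x_0),y-x_0\rangle,
\end{equation*}
the jet definition gives $u(y,s)-P(y,s)\le\omega(\rho(y,s))$ with $\rho(y,s)=|s-t_0|+|y-x_0|^2$ and $\omega(r)/r\to 0$ as $r\to 0^+$. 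I would then dominate $\omega$ by a non-decreasing $C^2$ function $\sigma$ with $\sigma(0)=\sigma'(0)=0$ and set $\psi(y,s):=P(y,s)+\sigma(\rho(y,s))$; by construction $\psi\ge u$ near $\zeta_0$ with equality at $\zeta_0$, and the added term vanishes to first order in $\rho$, so the prescribed jet values are attained.

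The main obstacle is producing the smooth majorant $\sigma$ so that the composition $\sigma\circ\rho$ is genuinely $C^2$ in $(y,s)$: the parabolic scale $\rho$ is not smooth at $\zeta_0$, so one must work with a smoothed surrogate such as $(s-t_0)^2+|y-x_0|^4$ and use a mollification-and-integration argument to replace the upper semicontinuous $\omega$ by a $C^2$ majorant while preserving the decay $\omega(r)=o(r)$. This is the standard construction carried out in Chapter~8 of \cite{crandall1992user}. Once $\sigma$ is in place, both implications close and the equivalence follows.
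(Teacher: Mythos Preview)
Your sketch is correct and is exactly the standard argument from Chapter~8 of \cite{crandall1992user}; the paper itself does not supply a proof but simply refers the reader to that chapter, so your proposal is strictly more detailed than what the paper provides while following the same route.
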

In what follows, a (sub/super-) solution will always refer to a viscosity (sub/super-) solution.
\begin{remark}
	in the literature the terms viscosity supersolutions and $p$-superparabolic functions appear in connection with equation \eqref{eq:p-parabolic}. The latter ones are defined via a comparison principle with respect to the weak solutions. Both concepts define the same class of functions. For the equation \eqref{eq:dirichlet problem}, there are no weak solutions to compare with, only viscosity solutions are available. Thus we avoid the term superparabolic here.
\end{remark}

 We need some preliminary results regarding existence and comparison, both from Juutinen \& Kawohl. The first result is Theorem 3.1 in  \cite{Juutinen2006}. 
 
 \begin{theorem}[Comparison]
 \label{thm:comparison}
 Let $Q_T = Q\times (0, T)$, where $Q$ is a bounded domain, and let $u$, $v$ be, respectively, a supersolution and a subsolution of the equation in $Q_T$. Assume further that
 \begin{equation*}
 \limsup_{\zeta \to \zeta_0} u(\zeta) \leq \liminf_{\zeta \to \zeta_0} v(\zeta) 
 \end{equation*}
 for all $\zeta_0 \in \partial_pQ_T$ (excluding the possibility that both sides are either $\infty$ or $-\infty$ at the same time). Then $u \leq v$ in $Q_T$.
 \end{theorem}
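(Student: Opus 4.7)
The result is Juutinen and Kawohl's parabolic comparison principle for $\Delta_\infty^N$, so the plan is to reproduce their argument by the doubling-of-variables method of Crandall-Ishii-Lions, with a spatial penalty chosen to isolate the singularity of the normalised operator at $Du = 0$.

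After the routine reduction that upgrades the boundary inequality to a strict one (add a small $\eta > 0$ to the subsolution and send $\eta \to 0$ at the end), I would argue by contradiction. To keep maximizing sequences away from $\{t = T\}$, subtract a time penalty $\sigma/(T - t)$, $\sigma > 0$ small, and then double variables by considering
\[
\Phi_\varepsilon(x, t, y, s) \;=\; w(x, t) - W(y, s) - \frac{|x - y|^4}{4\varepsilon} - \frac{(t - s)^2}{2\varepsilon} - \frac{\sigma}{T - t},
\]
on $\overline{Q_T}\times\overline{Q_T}$, where $w$ denotes the subsolution and $W$ the supersolution. The key feature of the fourth-power spatial penalty (rather than the customary quadratic one) is that the associated test-gradient $p_\varepsilon = |x_\varepsilon - y_\varepsilon|^2(x_\varepsilon - y_\varepsilon)/\varepsilon$ vanishes exactly on the diagonal, and the spatial Hessian of the penalty vanishes there as well, so that the singular case $p = 0$ of Definition~\ref{def:viscositydefinition} can be cleanly isolated.

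A standard compactness lemma then gives, for each $\sigma > 0$ and $\varepsilon$ small enough, a maximum at an interior point $(x_\varepsilon, t_\varepsilon, y_\varepsilon, s_\varepsilon)$ with $|x_\varepsilon - y_\varepsilon|^4/\varepsilon \to 0$ and $(t_\varepsilon - s_\varepsilon)^2/\varepsilon \to 0$. The parabolic theorem on sums (Theorem~8.3 in \cite{crandall1992user}) then produces jets $(a_\varepsilon, p_\varepsilon, X_\varepsilon) \in \overline{\mathcal{P}}^{2,+}w(x_\varepsilon, t_\varepsilon)$ and $(b_\varepsilon, p_\varepsilon, Y_\varepsilon) \in \overline{\mathcal{P}}^{2,-}W(y_\varepsilon, s_\varepsilon)$ with
\[
a_\varepsilon - b_\varepsilon \;=\; \frac{\sigma}{(T - t_\varepsilon)^2} \;>\; 0, \qquad X_\varepsilon \;\leq\; Y_\varepsilon,
\]
the matrix bound collapsing to $X_\varepsilon \leq 0 \leq Y_\varepsilon$ on the diagonal $x_\varepsilon = y_\varepsilon$.

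The main obstacle is the two-branch case split inherent in the singular operator. When $p_\varepsilon \neq 0$, applying Theorem~\ref{thm:equivalentdef} to each jet and subtracting yields
\[
0 \;<\; \frac{\sigma}{(T - t_\varepsilon)^2} \;\leq\; \Big\langle (X_\varepsilon - Y_\varepsilon)\tfrac{p_\varepsilon}{|p_\varepsilon|},\tfrac{p_\varepsilon}{|p_\varepsilon|}\Big\rangle \;\leq\; 0,
\]
a contradiction. When $p_\varepsilon = 0$, the inequality $X_\varepsilon \leq 0 \leq Y_\varepsilon$ forces $\Lambda(X_\varepsilon) \leq 0 \leq \lambda(Y_\varepsilon)$, and the singular branches of the definition give $a_\varepsilon \leq \Lambda(X_\varepsilon) \leq 0 \leq \lambda(Y_\varepsilon) \leq b_\varepsilon$, again contradicting $a_\varepsilon > b_\varepsilon$. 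Sending $\varepsilon \to 0$ and then $\sigma \to 0$ completes the argument.
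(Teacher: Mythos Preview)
The paper does not prove this theorem at all; it is quoted without proof as Theorem~3.1 of Juutinen--Kawohl \cite{Juutinen2006}. Your argument is essentially their proof: doubling of variables with a fourth-power spatial penalty so that both the test gradient and the penalty Hessian vanish on the diagonal, plus the $\sigma/(T-t)$ term to keep maxima away from the top of the cylinder. The two-case split ($p_\varepsilon \neq 0$ versus $p_\varepsilon = 0$) and the resulting contradictions are correct.

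For comparison, the paper \emph{does} carry out the same scheme for Theorem~\ref{thm:comparison_general} (arbitrary bounded domains with the full Euclidean boundary). There no time penalty is needed, and the diagonal case $x_\varepsilon = y_\varepsilon$ is handled by a direct test-function argument (using the penalty itself as the $C^2$ test function) rather than through Ishii's lemma as you do; both routes give the same contradiction.

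One small slip: the ``routine reduction'' you describe is phrased the wrong way round. Adding $\eta>0$ to the \emph{subsolution} worsens the boundary inequality rather than making it strict; you mean to subtract $\eta$ from the subsolution (or add it to the supersolution). In fact this step is unnecessary here, since the $\sigma/(T-t)$ penalty together with the contradiction hypothesis and the $\limsup/\liminf$ form of the boundary condition already force the doubled maximum into the interior.
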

 The following existence result is Theorem 4.1 in  \cite{Juutinen2006}.
\begin{theorem}[Existence]
\label{thm:existence}
Let $Q_T = Q\times (0, T)$, where $Q$ is a bounded domain, and let $\phi\in C(\mathbb{R}^{n+1})$. Then there exists a unique $h \in C(Q_T \cap \partial_pQ_T)$ such that $h=\phi $ on $\partial_pQ_T$ and
\begin{equation*}
h_t - \Delta_{\infty}^N h=0 \ \text{in} \ Q_T
\end{equation*}
\end{theorem}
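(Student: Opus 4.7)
The plan is to construct $h$ by a vanishing viscosity argument. For $\epsilon, \delta > 0$, I would regularise by solving
\begin{equation*}
u_t - \frac{\langle D^2 u\, Du, Du\rangle}{|Du|^2 + \delta^2} - \epsilon \Delta u = 0 \quad \text{in } Q_T, \qquad u = \phi \ \text{on} \ \partial_p Q_T.
\end{equation*}
This is a smooth, uniformly parabolic quasilinear equation, so after exhausting $Q$ by smooth subdomains $Q^k \uparrow Q$ if required, classical quasilinear parabolic theory (Ladyzhenskaya--Solonnikov--Uraltseva, or alternatively Krylov--Safonov plus bootstrapping) produces unique classical solutions $u^{\epsilon,\delta,k}$ attaining the prescribed data continuously on the parabolic boundary of $Q^k_T$.

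The task is then to pass to the limit in $k, \delta, \epsilon$. A uniform $L^\infty$-bound $\inf_{\partial_p Q_T} \phi \le u^{\epsilon,\delta,k} \le \sup_{\partial_p Q_T} \phi$ is immediate from the comparison principle for the approximating equation. For local equicontinuity inside $Q_T$, I would apply the Ishii--Lions doubling-of-variables technique: the operator $X \mapsto \langle X \xi, \xi\rangle/(|\xi|^2 + \delta^2)$ has ellipticity constants uniformly in $[0,1]$ on the rank-one part of $\mathbb{S}^n$, which suffices to produce an interior Hölder modulus stable as the regularisation parameters tend to zero. With equicontinuity in hand, Arzelà--Ascoli and the standard stability of viscosity solutions under local uniform convergence of operators and solutions yield a viscosity solution $h$ of $h_t - \Delta_{\infty}^N h = 0$; uniqueness of such $h$ is immediate from Theorem \ref{thm:comparison}.

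The main obstacle is obtaining a modulus of continuity up to $\partial_p Q_T$ that is uniform in $\epsilon, \delta, k$, so that the limit actually attains the continuous data $\phi$. On the initial slice $\overline{Q} \times \{0\}$ one can compare against test functions of the form $\phi(x_0, 0) \pm (L|x - x_0|^2 + M t)$, which are super- or subsolutions of the regularised equation for constants $L, M$ depending only on $\|\phi\|_\infty$ and the modulus of $\phi$. The lateral part $\partial Q \times (0, T]$ is the genuine difficulty for arbitrary bounded $Q$: one needs an exterior barrier at each boundary point, which one can assemble from the distance function to $\partial Q$ together with a compensating $t$-dependent term (morally the Exterior Sphere-type construction developed later in Section \ref{sec:barrier}). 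Once such barriers are in place for the regularised equation with estimates uniform in the parameters, comparison in the approximating problems delivers the desired continuity up to $\partial_p Q_T$ and closes the proof.
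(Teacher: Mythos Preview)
The paper does not prove this theorem at all: it is simply quoted as Theorem~4.1 of Juutinen--Kawohl \cite{Juutinen2006}. Your vanishing-viscosity scheme is close in spirit to their argument (they approximate by the normalised $p$-parabolic equation and let $p\to\infty$), so the overall architecture is sound and comparable.

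There is, however, a real gap in your treatment of the lateral boundary. You correctly flag $\partial Q\times(0,T]$ as the hard part for \emph{arbitrary} bounded $Q$, but the fix you propose---an exterior-sphere-type barrier built from the distance to $\partial Q$---does not work without geometric hypotheses on $Q$ (no exterior ball, no smooth distance). Moreover, invoking Section~\ref{sec:barrier} here would be circular: that section, and in particular the proof that lateral points of cylinders are regular (Theorem~\ref{thm:regularcylinders}), relies on the very existence result you are trying to establish. What actually rescues the argument is the special one-dimensional nature of $\Delta_\infty^N$: the cone $x\mapsto |x-x_0|$ is $\infty$-harmonic away from its vertex, so for any lateral point $(x_0,t_0)$ and any $L>0$ the function
\[
(x,t)\ \longmapsto\ L\,|x-x_0| + (t_0-t)
\]
(and its negative) serve as explicit barriers for the limit equation, with no assumption on $\partial Q$ whatsoever. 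One still has to check that analogous barriers control the \emph{regularised} problems uniformly in $\epsilon,\delta$; this is where Juutinen--Kawohl's $p$-approximation is convenient, since for large finite $p$ the barrier $|x-x_0|^{(p-n)/(p-1)}$ is available and stable as $p\to\infty$ (compare the argument in Section~\ref{sec:regularcylinders}). If you insist on the $\epsilon\Delta$-regularisation you will need to manufacture barriers that absorb the extra $\epsilon\Delta$ term uniformly, which is more delicate for rough $Q$.
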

We wish to extend both theorems to an arbitrary domain.

The following lemma will be useful to us:
\begin{lemma}
\label{thm:sequence}
Let $u_i$ be a  locally uniformly bounded sequence of viscosity solutions in $\Omega$. Then there exists a subsequence that converges locally uniformly in $\Omega$ to a viscosity solution.
\end{lemma}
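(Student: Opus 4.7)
The plan is to combine a local equicontinuity bound with the standard stability of viscosity solutions. The argument has two main ingredients: (i) extracting a locally uniformly convergent subsequence from the family $\{u_i\}$, and (ii) checking that the limit is a viscosity solution.

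For (i), I would first establish a local modulus of continuity for the $u_i$'s that is independent of $i$. Working on a compact subset $K\Subset \Omega$, the uniform bound $|u_i|\le M$ on a slightly larger compact set $K'$ combined with comparison against translated cones (or with explicit ``cone-like'' viscosity supersolutions of the form $A|x-x_0|+B(t-t_0)+C$, which are easily checked to satisfy $\psi_t-\Delta_\infty^N\psi\ge 0$ where $D\psi\ne 0$) yields a uniform spatial Lipschitz estimate on $K$. A uniform Hölder-$1/2$ estimate in time then follows in the standard way by comparing $u_i$ with barriers of the form $L|x-x_0|^2/(t-s)+C$, which are controlled through the spatial Lipschitz bound. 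With uniform spatial Lipschitz and temporal Hölder bounds on each compact subset, Arzelà–Ascoli, applied along an exhaustion of $\Omega$ by compact sets together with a diagonal extraction, produces a subsequence $u_{i_k}$ converging locally uniformly to some continuous function $u$.

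For (ii), I would invoke the standard stability principle for viscosity solutions: if $\psi\in C^2(\Omega)$ and $u-\psi$ attains a strict local maximum at $\zeta_0=(x_0,t_0)$, then because $u_{i_k}\to u$ locally uniformly there exist points $\zeta_k\to\zeta_0$ at which $u_{i_k}-\psi$ attains a local maximum. Plugging into the subsolution inequality for $u_{i_k}$ at $\zeta_k$ and passing to the limit gives the corresponding inequality for $u$ at $\zeta_0$. The same argument applies to supersolutions by considering minima. The strict-maximum hypothesis can always be arranged by replacing $\psi$ with $\psi+\varepsilon(|x-x_0|^4+(t-t_0)^2)$ and then letting $\varepsilon\to 0$.

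The subtle step, which I expect to be the main obstacle, is the passage to the limit when $D\psi(\zeta_0)=0$. Here the operator is discontinuous, and one must use the upper/lower semicontinuous envelopes $-\Lambda(X)$ and $-\lambda(X)$ built into Definition \ref{def:viscositydefinition}. If $D\psi(\zeta_k)\ne 0$ along a subsequence, then $\langle D^2\psi(\zeta_k)\tfrac{D\psi(\zeta_k)}{|D\psi(\zeta_k)|},\tfrac{D\psi(\zeta_k)}{|D\psi(\zeta_k)|}\rangle\le \Lambda(D^2\psi(\zeta_k))\to \Lambda(D^2\psi(\zeta_0))$, so the subsolution inequality for $u$ with $D\psi(\zeta_0)=0$ follows. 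If instead $D\psi(\zeta_k)=0$ for all large $k$, the inequality is inherited directly. The supersolution case is symmetric using $\lambda$. Since the limit function $u$ is continuous (being the locally uniform limit of continuous functions) and satisfies both viscosity inequalities, it is a viscosity solution, completing the proof.
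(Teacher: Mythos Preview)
Your plan is correct, but both steps diverge from the paper's argument. For the equicontinuity, the paper simply quotes the interior Lipschitz estimate of Juutinen--Kawohl (their Corollary~5.2), which gives $|Du_i(\zeta)|\le C\bigl(1+\|u_i\|_\infty/\mathrm{dist}(\zeta,\partial_p\Omega)\bigr)$ directly, and then runs Arzel\`a--Ascoli plus diagonalization exactly as you propose; your cone-comparison sketch would amount to reproving that estimate from scratch. The more substantial difference is in step~(ii): the paper does \emph{not} pass to the limit in the test-function inequalities. Instead, on each box $Q_{t_1,t_2}\Subset\Omega$ it invokes Theorem~\ref{thm:existence} to produce the solution $h$ with boundary data $u$ on $\partial_pQ_{t_1,t_2}$, and then uses the comparison principle (Theorem~\ref{thm:comparison}) between $u_i$ and $h\pm\epsilon$ to force $u=h$. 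This sidesteps entirely the discontinuity of $\Delta_\infty^N$ at $D\psi=0$ that you rightly flag as the delicate point of the direct stability argument. The trade-off is that the paper's route is shorter and avoids any case analysis, but it consumes the existence and comparison theorems on cylinders; your route is the standard viscosity-stability argument, more self-contained and portable, at the price of handling the singular case explicitly.
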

\begin{proof}
Let $\Sigma \subset \Omega$ be a proper subdomain. Corollary 5.2 in Juutinen \& Kawohl guarantees the existence of a constant
$C>0$ so that 
\[
|Du_i(\zeta)| \leq C\left(1+\frac{||u_i||_{\infty}}{\text{dist}(\zeta, \partial_p\Omega)} \right),
\]
that is, $\{u_i\}$ is equicontinuous in $\Sigma$. The Arzelà-Ascoli theorem gives the existence of a subsequence converging uniformly in $\Sigma$ to a continuous function. Exhausting $\Omega$ by a sequence of subdomains and using an diagonalization argument, we obtain a locally uniformly convergence sequence, again denoted by $u_i$.

It remains to show that $u$ is a solution on each box $Q_{t_1, t_2}=Q\times(t_1, t_2) \subset \Omega$. Let $h$ be the solution in $Q_{t_1, t_2}$ with boundary values $u$ on $\partial_pQ_{t_1, t_2}$ (Theorem \ref{thm:existence} guarantees the existence of such a $h$.) Let $\epsilon >0$. For large enough values of $i$, we have
\begin{equation*}
h-\epsilon = u-\epsilon < u_i < u+\epsilon =h+\epsilon
\end{equation*}
on $\partial_pQ_{t_1, t_2}$. The comparison principle \ref{thm:comparison}  gives
\begin{equation*}
h-\epsilon \leq u_i \leq h+\epsilon
\end{equation*}
in $Q_{t_1, t_2}$, and so
\begin{equation*}
h-\epsilon \leq u \leq h+\epsilon,
\end{equation*}
and $u=h.$
\end{proof}

%
%
%
%
%
We shall prove a comparison result for arbitrary domains. For this we rely on the parabolic version of Ishii's Lemma, taken from \cite{crandall1990maximum}.
\begin{theorem}
Let $u, v \in \operatorname{USC}(Q \times Q \times (0, T))$, be  subsolutions to the $\infty$-heat equation. Let $\theta(x, y, t)$ be once differentiable in $t$ and twice differentiable in $(x, y)$. Define
\begin{equation*}
w(x, y, t) = u(x, t)+v(y, t)-\theta(x, y, t),
\end{equation*}
and assume that $w$ attains an interior maximum at the point $(\hat{x}, \hat{y}, s)\in Q \times Q \times (0, T).$ Then, for each $\epsilon >0$ there are matrices $X, Y \in \mathbb{S}^n$ such that
\begin{equation*}
(b_1, D_x\theta(\hat{x}, \hat{y}, s), X) \in \bar{\mathcal{P}}^{2, +}_{\Omega}u(\hat{x}, s),
\end{equation*} 
\begin{equation*}
(b_2, D_y\theta(\hat{x}, \hat{y}, s), Y) \in \bar{\mathcal{P}}^{2, +}_{\Omega}v(\hat{y}, s)
\end{equation*} 
\begin{equation*}
-\left(\frac{1}{\epsilon}+||A||\right)I \leq
 \begin{pmatrix}
X & 0 \\
0 & Y
 \end{pmatrix}
 \leq
 A+\epsilon A^2
\end{equation*}
and $b_1+b_2 =\theta_t(\hat{x}, \hat{y}, s)$ and $A=D^2\theta(\hat{x}, \hat{y}, s)$.
\end{theorem}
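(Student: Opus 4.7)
The plan is to follow the Crandall--Ishii proof of the parabolic theorem of sums, adapting the standard elliptic argument in \cite{crandall1992user} by carrying the time variable along as a parameter. The proof splits into three steps: a regularization by parabolic sup-convolution, a local application of Alexandrov's theorem together with Ishii's matrix lemma at a point of twice-differentiability, and a passage to the limit exploiting the closedness of $\bar{\mathcal{P}}^{2,+}$.

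First, I would replace $u$ and $v$ by the parabolic sup-convolutions
$$
u^\alpha(x,t) := \sup_{(y,s)}\Bigl\{u(y,s) - \tfrac{1}{2\alpha}|x-y|^2 - \tfrac{1}{2\alpha}(t-s)^2\Bigr\},
$$
and analogously $v^\alpha$. These are upper semicontinuous and semiconvex in $(x,t)$, and each element of a semi-jet of $u^\alpha$ at $(x,t)$ is inherited from a semi-jet of $u$ at a nearby source point, so it will be enough to prove the statement for the approximants and then trace the jets back. The interior maximum $(\hat{x}, \hat{y}, s)$ of $w$ perturbs to a maximum $(\hat{x}_\alpha, \hat{y}_\alpha, s_\alpha) \to (\hat{x}, \hat{y}, s)$ of $w^\alpha(x,y,t) := u^\alpha(x,t) + v^\alpha(y,t) - \theta(x,y,t)$ as $\alpha \to 0$.

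Next, Alexandrov's theorem combined with a Jensen-type perturbation (sliding the maximum by adding a tiny linear term to $\theta$) allows me to assume that $u^\alpha$ and $v^\alpha$ admit second-order Taylor expansions at $(\hat{x}_\alpha, s_\alpha)$ and $(\hat{y}_\alpha, s_\alpha)$. First-order optimality in $t$ then produces times-derivatives $b_1, b_2$ with $b_1 + b_2 = \theta_t$; first-order optimality in $(x,y)$ produces the gradients $D_x\theta, D_y\theta$; and second-order optimality in $(x,y)$ yields the block Hessian inequality $\mathrm{diag}(D^2_x u^\alpha, D^2_y v^\alpha) \leq A_\alpha := D^2\theta(\hat{x}_\alpha, \hat{y}_\alpha, s_\alpha)$. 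Ishii's magic matrix lemma then upgrades this to the sharp two-sided bound $-(\tfrac{1}{\epsilon}+\|A_\alpha\|)I \leq \mathrm{diag}(X_\alpha, Y_\alpha) \leq A_\alpha + \epsilon A_\alpha^2$, via a spectral decomposition on the diagonal and antidiagonal subspaces of $\mathbb{R}^n \oplus \mathbb{R}^n$.

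Finally, I would pass to the limit $\alpha \to 0$. Since $D^2\theta$ and $\theta_t$ are continuous near $(\hat{x},\hat{y},s)$, the jet data $(b_1^\alpha, D_x\theta(\hat{x}_\alpha,\hat{y}_\alpha,s_\alpha), X_\alpha)$ and $(b_2^\alpha, D_y\theta(\hat{x}_\alpha,\hat{y}_\alpha,s_\alpha), Y_\alpha)$ stay bounded; extracting a convergent subsequence and invoking the very definition of $\bar{\mathcal{P}}^{2,+}$ as a closure yields the required jets for $u$ and $v$ at $(\hat{x}, s)$ and $(\hat{y}, s)$. The main obstacle is the matrix lemma itself: obtaining the precise upper bound $A + \epsilon A^2$ rather than a cruder $A + CI$ requires a careful algebraic argument with projections onto the diagonal and antidiagonal subspaces together with a spectral decomposition of $A$, and it is this step which makes the theorem nontrivial. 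The rest is standard viscosity-theoretic machinery; in particular, the hypothesis that $u, v$ are subsolutions of the $\infty$-heat equation is not used in the lemma itself — only the upper semicontinuity and the existence of an interior maximum are invoked.
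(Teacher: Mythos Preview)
Your outline is a faithful sketch of the standard Crandall--Ishii proof of the parabolic theorem of sums, and the steps you list (sup-convolution, Jensen--Alexandrov perturbation, the matrix lemma, passage to the closure $\bar{\mathcal{P}}^{2,+}$) are the right ones. You are also correct that the subsolution hypothesis plays no role in the lemma itself; only upper semicontinuity and the interior maximum are used.

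However, the paper does not prove this theorem at all. It is quoted verbatim as a tool, with the attribution ``taken from \cite{crandall1990maximum}'', and is then applied as a black box in the proof of Theorem~\ref{thm:comparison_general}. So there is no ``paper's own proof'' to compare against: the paper treats the parabolic Ishii lemma as a known external result, and your proposal is simply a reconstruction of that external result's proof rather than of anything the present paper does.
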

We can now prove the comparison theorem for general domains.
\begin{theorem}
	\label{thm:comparison_general}
Let $\Omega$ be bounded domain in $\mathbb{R}^n\times\mathbb{R}$, and let $u$, be a subsolution and $v$ a supersolution in $\Omega$. Assume further that
\begin{equation*}
\limsup_{\zeta \to \zeta_0} u(\zeta) \leq \liminf_{\zeta \to \zeta_0} v(\zeta) 
\end{equation*}
for all $\zeta_0 \in \partial\Omega$ (excluding the possibility that both sides are either $\infty$ or $-\infty$ at the same time). Then $u \leq v$ in $\Omega$.
\end{theorem}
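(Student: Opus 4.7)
The plan is to extend the doubling-of-variables proof of Theorem \ref{thm:comparison} to the non-cylindrical setting; the PDE-level computations at the maximizer are identical to the cylinder case, so the genuine new content is a compactness argument ensuring that the relevant supremum is attained at an interior point of $\Omega \times \Omega$.

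Suppose toward contradiction that $\sup_\Omega(u - v) =: 3M > 0$. After a harmless time-translation I may assume $\Omega \subset \mathbb{R}^n \times (0, \infty)$. Replace $v$ by $\tilde v := v + \delta t$ for $\delta > 0$ small: a direct check from the test-function definition shows $\tilde v$ is a \emph{strict} supersolution (any $\psi$ touching $\tilde v$ from below at $\zeta_0$ satisfies $\psi_t(\zeta_0) - \Delta_\infty^N\psi(\zeta_0) \geq \delta$ when $D\psi(\zeta_0) \neq 0$, and the analogous inequality with $\lambda(D^2\psi(\zeta_0))$ otherwise). For $\delta$ small the boundary inequality still holds on $\partial \Omega$ and $\sup_\Omega(u - \tilde v) \geq 2M$. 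Since $u - \tilde v \in \operatorname{USC}(\Omega)$ and the boundary hypothesis forces $u - \tilde v < M$ in a neighborhood of every point of $\partial\Omega$, the super-level set $\{u - \tilde v \geq M\}$ lies in a compact subset $K \Subset \Omega$.

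Introduce the doubled functional
$$w_\epsilon(x, y, t) = u(x, t) - \tilde v(y, t) - \frac{|x - y|^2}{\epsilon}$$
on $\{(x, y, t) : (x, t), (y, t) \in \Omega\}$. Its supremum exceeds $2M$ (plug in $y = x$), and by the boundary control above combined with the quadratic penalty, every maximizing sequence has a subsequential limit lying in a fixed compact subset of $\Omega$; by upper semicontinuity the supremum is therefore attained at some $(x_\epsilon, y_\epsilon, t_\epsilon)$ in the interior. The standard penalty estimate gives $|x_\epsilon - y_\epsilon|^2/\epsilon \to 0$. Applying Ishii's Lemma with $\theta(x, y, t) = |x - y|^2/\epsilon$ furnishes $b_1, b_2 \in \mathbb{R}$ with $b_1 + b_2 = \theta_t = 0$, jets with common ``gradient'' $p_\epsilon := 2(x_\epsilon - y_\epsilon)/\epsilon$, and matrices $X, Y$ satisfying $X + Y \leq 0$. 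In the non-degenerate case $p_\epsilon \neq 0$, the sub-solution inequality for $u$ and the strict super-solution inequality for $\tilde v$ combine into
$$0 \;=\; b_1 + b_2 \;\leq\; \Big\langle(X + Y)\tfrac{p_\epsilon}{|p_\epsilon|},\tfrac{p_\epsilon}{|p_\epsilon|}\Big\rangle - \delta \;\leq\; -\delta,$$
a contradiction.

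The main obstacle is the degenerate case $p_\epsilon = 0$ (that is, $x_\epsilon = y_\epsilon$): there the operative quantities become $\Lambda(X)$ and $\lambda(-Y) = -\Lambda(Y)$, while $X + Y \leq 0$ only yields $\Lambda(X) + \lambda(Y) \leq 0$, which is strictly weaker than the $\Lambda(X) + \Lambda(Y) \leq 0$ one would need to close the argument. I would handle this exactly as in the cylinder proof of Juutinen--Kawohl: perturb $\theta$ by a small linear term $\sigma \cdot (x - y)$ with generic $\sigma$, forcing $p_\epsilon \neq 0$, then send $\sigma \to 0$ followed by $\delta \to 0$.
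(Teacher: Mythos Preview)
Your sketch is a valid alternative, but it diverges from the paper's proof in two structural ways worth flagging. The paper doubles in time as well as space and, crucially, uses a \emph{quartic} spatial penalty,
\[
w_\epsilon(x,t,y,s)=u(x,t)-v(y,s)-\tfrac{1}{4\epsilon}|x-y|^{4}-\tfrac{1}{2\epsilon}(t-s)^{2},
\]
with no $\delta t$ perturbation of $v$. The quartic is exactly the device that dissolves your ``main obstacle'': when $x_\epsilon=y_\epsilon$ the spatial Hessian of $|x-y|^{4}$ vanishes, so the explicit test functions touching $u$ from above and $v$ from below at that point have $\Lambda(D^2\psi)=\lambda(D^2\phi)=0$, and the eigenvalue mismatch you correctly identify simply never appears---one argues directly with the test functions (no Ishii needed in that case) and reads off a contradiction from the time derivatives alone. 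Your route (quadratic penalty, common time, strict supersolution $v+\delta t$, then a generic linear shift $\sigma\cdot(x-y)$ to force $p_\epsilon\neq 0$) is a known workaround, but it costs an extra two-parameter limit and leaves the most delicate step as a pointer to another paper; the quartic penalty is both the paper's choice and the more standard tool for normalized $\infty$-Laplacian comparison, and it is cleanest precisely where your outline is thinnest.
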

\begin{proof}
We make the following \emph{antithesis}:

Assume that 
\begin{equation}
\sup_{\Omega}(u(x, t)-v(x, t)) >0
\label{eq:antithesis}
\end{equation}
and that the supremum is attained at some $(\hat{x}, \hat{t})\in \Omega$.
Consider the function
\begin{equation}
w_{\epsilon}(x, t, y, s) =u(x, t)-v(y, s) -\frac{1}{4\epsilon}|x-y|^4-\frac{1}{2\epsilon}(t-s)^2,
\end{equation}
and let $w_{\epsilon}$ have a maximum at $(x_{\epsilon}, t_{\epsilon}, y_{\epsilon}, s_{\epsilon})$. Since we assumed that
\[
 \limsup_{\zeta \to \zeta_0} u(\zeta) \leq \liminf_{\zeta \to \zeta_0} v(\zeta), 
\]
 \eqref{eq:antithesis} implies that this maximum is in the interior of $\Omega \times \Omega$ for $\epsilon$ small enough, see Theorem 3.4 in \cite{koike}.

{\bf 1.} If $x_{\epsilon}=y_{\epsilon}$, then $v-\phi$ has a minimum at $(y_{\epsilon}, s_{\epsilon})$, where
\[
\phi(y, s) =-\frac{1}{4\epsilon}|x-y|^4-\frac{1}{2\epsilon}(t-s)^2.
\]
 Since $v$ is a  supersolution and $D\phi(y_{\epsilon}, s_{\epsilon})=0$, we get
\[
0 <\phi_t(y_{\epsilon}, s_{\epsilon}) <\lambda(D^2\phi(y_{\epsilon}, s_{\epsilon}) =\frac{1}{\epsilon}(t_{\epsilon}-s_{\epsilon}).
\]
In the same way we get that $u-\psi$, where 
\[
\psi(x, t) =\frac{1}{4\epsilon}|x-y|^4+\frac{1}{2\epsilon}(t-s)^2,
\]
has a maximum at $(x_{\epsilon}, t_{\epsilon})$. Thus
\[
0>\psi_t(x_{\epsilon}, t_{\epsilon}) >\Lambda(D^2\phi(x_{\epsilon}, t_{\epsilon}))=\frac{1}{\epsilon}(t_{\epsilon}-s_{\epsilon}).
\]
Subtracting these two inequalities gives
\[
0<\frac{1}{\epsilon}(t_{\epsilon}-s_{\epsilon})-\frac{1}{\epsilon}(t_{\epsilon}-s_{\epsilon})=0,
\]
a contradiction.

{\bf 2.} If $x_{\epsilon}\neq y_{\epsilon}$, Ishii's Lemma now gives, after fixing $t=t_{\epsilon}$ and $s= s_{\epsilon}$ in $w_{\epsilon}$, the existence of symmetric matrices $X_{\epsilon}, Y_{\epsilon}$ so that

\begin{align}
&\left(\frac{1}{\epsilon}(t_{\epsilon}-s_{\epsilon}), \frac{1}{\epsilon}|x_{\epsilon}-y_{\epsilon}|(x_{\epsilon}-y_{\epsilon}), X_{\epsilon}\right) \in \bar{\mathcal{P}}^{2, +}u(t_{\epsilon}, x_{\epsilon}), \label{eq:onejet} \\
&\left(\frac{1}{\epsilon}(t_{\epsilon}-s_{\epsilon}), \frac{1}{\epsilon}|x_{\epsilon}-y_{\epsilon}|(x_{\epsilon}-y_{\epsilon}), Y_{\epsilon}\right) \in \bar{\mathcal{P}}^{2, -}v(s_{\epsilon}, y_{\epsilon}), \label{eq:twojet}
\end{align}
and $Y_{\epsilon} \geq X_{\epsilon}$. Since $u$ is a subsolution and $v$ is a supersolution, we get by Theorem \ref{thm:equivalentdef} and  inserting \eqref{eq:onejet} and \eqref{eq:twojet} into the equation \eqref{eq:dirichlet problem} and subtracting, 
\begin{align*}
0< \frac{1}{\epsilon} (t_{\epsilon}-s_{\epsilon}) -\left(Y_{\epsilon}\frac{(x_{\epsilon}-y_{\epsilon})}{|x_{\epsilon}-y_{\epsilon}|}\right)\cdot \frac{(x_{\epsilon}-y_{\epsilon})}{|x_{\epsilon}-y_{\epsilon}|} \\
-\left[ \frac{1}{\epsilon} (t_{\epsilon}-s_{\epsilon}) -\left(X_{\epsilon}\frac{(x_{\epsilon}-y_{\epsilon})}{|x_{\epsilon}-y_{\epsilon}|}\right)\cdot \frac{(x_{\epsilon}-y_{\epsilon})}{|x_{\epsilon}-y_{\epsilon}|}\right] \\
= -\left((Y_{\epsilon}-X_{\epsilon})\frac{(x_{\epsilon}-y_{\epsilon})}{|x_{\epsilon}-y_{\epsilon}|}\right)\cdot \frac{(x_{\epsilon}-y_{\epsilon})}{|x_{\epsilon}-y_{\epsilon}|} \leq 0,
\end{align*}
a contradiction.
\end{proof}
\section{The Perron Method}
\label{sec:perron}
\noindent We start  with a definition. Let $f : \partial \Omega \to \mathbb{R}$ be a bounded function.
\begin{definition}
A function $u$ belongs to the \emph{upper class} $\mathcal{U}_f$ if $u$ is a viscosity supersolution in $\Omega$ and
\begin{equation*}
\liminf_{\eta \to \zeta}u(\eta) \geq f(\zeta)
\end{equation*}
for $\zeta \in \partial \Omega$.

A function $v$ belongs to the \emph{lower class} $\mathcal{L}_f$ if $v$ is a viscosity sub solution in $\Omega$ and
\begin{equation*}
\limsup_{\eta \to \zeta}v(\eta) \leq f(\zeta)
\end{equation*}
for $\zeta \in \partial \Omega$.

We define the \emph{upper solution} 
\[
\overline{H}_f(\zeta)=\inf\{u(\zeta) : u \in \mathcal{U}_f \},
\] 
and the \emph{lower solution}
\[
\underline{H}_f(\zeta)=\sup\{v(\zeta) : v \in \mathcal{L}_f \}.
\]
 Note that the $\inf$ and $\sup$ are taken over \emph{functions}.
 \end{definition}
\begin{remark}
The Comparison Principle \ref{thm:comparison_general} gives immediately that $v \leq u$ in $\Omega$, for $v \in\mathcal{L}_f$ and $u \in \mathcal{U}_f$, and hence 
\[\underline{H}_f\leq \overline{H}_f.\]
 Whether $\underline{H}_f= \overline{H}_f$ holds in general, is a more subtle question.
\end{remark}
A classical tool in the potential theory is the \emph{parabolic modification}: Let $Q_T=Q\times[0, T]$ be a cylinder contained in $\Omega$, and define 
\[
v(\zeta)= \sup\{h(\zeta) : h \in C(Q_T), h \ \text{is} \ \infty  -\ \text{parabolic and} \ h \leq u \ \text{on} \ \partial_p Q_T \}.
\]
Now let
\begin{equation*}
U= \begin{cases}
u \ \text{in} \ \Omega \setminus Q_T \\
v \ \text{in} \ Q_T.
\end{cases}
\end{equation*}
It is clear that $U \leq u$ in $\Omega$. Furthermore, we have the following
\begin{lemma}
U is a supersolution in $\Omega$ and a solution in $Q_T$.
\end{lemma}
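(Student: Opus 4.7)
The plan is to split the task into three parts: first showing $v \leq u$ in $Q_T$, then verifying that $v$ is itself an $\infty$-parabolic solution in $Q_T$ via the classical Perron construction, and finally gluing across $\partial Q_T$ to check that $U$ is lower semicontinuous on $\Omega$ and a supersolution there. The first part is immediate from comparison: for any admissible $h$ in the family defining $v$, the function $h$ is a solution in $Q_T$, $u|_{Q_T}$ a supersolution, $h\in C(\overline{Q_T})$ with $h\leq u$ on $\partial_p Q_T$, and $u$ is lower semicontinuous, so Theorem \ref{thm:comparison} applies and gives $h\leq u$ in $Q_T$; taking the sup over $h$ yields $v\leq u$, hence $U\leq u$ throughout $\Omega$.

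For the second part, since $u$ is lower semicontinuous on the compact set $\partial_p Q_T$ one may write $u|_{\partial_p Q_T}$ as the pointwise increasing limit of continuous functions $\phi_k$. Extending each $\phi_k$ continuously to $\mathbb{R}^{n+1}$ and invoking Theorem \ref{thm:existence}, the unique continuous solution $h_k$ in $Q_T$ with boundary values $\phi_k$ on $\partial_p Q_T$ is admissible, so $h_k\leq v$; in particular $v$ is bounded below and $\liminf_{\eta\to\zeta_0}v(\eta)\geq u(\zeta_0)$ for every $\zeta_0\in\partial_p Q_T$. The upper semicontinuous envelope $v^*$ is a subsolution in $Q_T$ as the upper envelope of a family of subsolutions. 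For the supersolution property of the lower envelope $v_*$, I run the standard Perron bump: if some $\psi\in C^2$ touches $v_*$ from below at an interior $\zeta_0\in Q_T$ where the supersolution inequality fails, then replacing $v$ by $\max(v,\psi+\delta)$ on a small cylinder around $\zeta_0$ produces a strictly larger admissible competitor, contradicting the definition of $v$. Comparison (Theorem \ref{thm:comparison_general}) inside $Q_T$, combined with the boundary inequality just established, forces $v^*\leq v_*$, so $v=v^*=v_*$ is a continuous solution in $Q_T$.

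For the third part, $U=v$ is a solution in $Q_T$ and $U=u$ is a supersolution in $\Omega\setminus\overline{Q_T}$; only points of $\partial Q_T\cap\Omega$ require attention. At such $\zeta_0$, lower semicontinuity of $U$ follows from $U(\zeta_0)=u(\zeta_0)$, the LSC of $u$, the global bound $U\leq u$, and the one-sided limit $\liminf_{\eta\to\zeta_0}v(\eta)\geq u(\zeta_0)$ from above. If $\psi\in C^2$ satisfies $\psi\leq U$ near $\zeta_0$ with $\psi(\zeta_0)=U(\zeta_0)$, then because $U\leq u$ in $\Omega$ and $U(\zeta_0)=u(\zeta_0)$, the function $u-\psi$ also has a local minimum at $\zeta_0$, and the supersolution property of $u$ delivers the required inequality for $\psi$ in both the $D\psi(\zeta_0)\neq 0$ and the $D\psi(\zeta_0)=0$ branches of Definition \ref{def:viscositydefinition}.

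The delicate step is the Perron bump in the middle paragraph: one must choose the perturbation $\psi+\delta$ carefully so that the resulting modification of $v$ remains a subsolution on its support, stays below $u$ on $\partial_p Q_T$, and still strictly exceeds $v$ at some interior point, all simultaneously. A second subtle point is the boundary identification $\liminf_{\eta\to\zeta_0}v(\eta)\geq u(\zeta_0)$ on $\partial_p Q_T$, which depends essentially on Theorem \ref{thm:existence} together with the approximation from below of the lower semicontinuous boundary data $u|_{\partial_p Q_T}$ by continuous functions.
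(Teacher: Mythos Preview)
Your overall structure---comparison to get $v\le u$, then showing $v$ is a solution in $Q_T$, then pasting across $\partial Q_T$---is sound, and the first and third steps are correctly argued. The gap is in the middle step.

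The Perron bump you invoke does not apply as stated. The admissible class in the definition of $v$ consists of $\infty$-parabolic \emph{solutions} $h\in C(Q_T)$ with $h\le u$ on $\partial_p Q_T$, not subsolutions. When you replace $v$ by $\max(v,\psi+\delta)$ near $\zeta_0$, the resulting function is merely a viscosity subsolution; it is not an admissible competitor in the family defining $v$, so the fact that it exceeds $v$ somewhere yields no contradiction. The standard Ishii bump works when the Perron function is a supremum over \emph{subsolutions}; here the family is strictly smaller. One could repair this by first proving that $v$ coincides with the supremum over continuous subsolutions satisfying the same boundary constraint (via existence and comparison), but that is an additional nontrivial step you have not supplied.

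The paper avoids this issue entirely and in fact uses the very sequence you already built. Having chosen continuous $h_i\nearrow u$ on $\partial_p Q_T$ and solved for $u_i$ with $u_i=h_i$ on $\partial_p Q_T$, comparison makes $(u_i)$ increasing; one checks $\lim u_i=v$ (each $u_i$ is admissible, and conversely any admissible $h$ satisfies $\limsup h\le u\le\liminf(\lim u_i)$ on $\partial_p Q_T$, so $h\le\lim u_i$ by comparison). Then Lemma~\ref{thm:sequence}, the compactness lemma for locally uniformly bounded solutions, gives directly that $v=\lim u_i$ is a solution in $Q_T$. No envelopes or bump are needed; the Lipschitz estimate behind Lemma~\ref{thm:sequence} does the work that your Perron argument was meant to do.
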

\begin{proof}
Choose an increasing sequence $h_i$ of continuous functions on $\partial_pQ_T$ so that $\lim_{i \to \infty}h_i =u$, and  let $u_i$ be the solution with boundary values $h_i$. The Comparison Principle \ref{thm:comparison} now implies that $u_i$ is an increasing sequence, and that the limit function is $v$. Since $u_i$ is bounded, Lemma \ref{thm:sequence} gives that the limit function is a solution. The immediate consequence is that $U$ is a supersolution in $\Omega.$
\end{proof}
\subsection{Perron solutions are viscosity solutions}
\label{sec:perron=viscosity}
 We show that the Upper Perron solution is a viscosity solution. 
 
Letting $\overline{H}=\overline{H}_f$, we show that $H$ is a supersolution.We check that 
\begin{equation*}
\begin{cases}
\phi_t(\zeta_0)-\Delta_{\infty}^N\phi(\zeta_0) \geq 0 \ \text{when} \ D\phi(\zeta_0) \neq 0, \\
\phi_t(\zeta_0)-\lambda(D^2\phi(\zeta_0)) \geq 0 \ \text{when} \ D\phi(\zeta_0) = 0,
\end{cases}
\end{equation*}
whenever $\phi \in C^2(\Omega)$ and 
\[
0=(\overline{H}-\phi)(\zeta_0)<(\overline{H}-\phi)(\zeta),
\]
 for $\zeta \in \Omega \setminus \{\zeta_0\}$.

Let $r>0$ be so that $B_{2r}(\zeta_0)\subset \Omega$. By our assumption, we can find a $\sigma>0$ so that
\begin{equation}
\label{eq:inf on ball}
\inf_{\partial B_{r}(\zeta_0)}(\overline{H}-\phi) = \sigma
\end{equation}
Now choose a sequence of points $\{\zeta_{\epsilon}\}\subset B_{r}$ so that $\zeta_{\epsilon}\to \zeta_0$ and $|\phi(\zeta_{\epsilon})-\phi(\zeta_0)|<\epsilon$, as $\epsilon \to 0$. Also choose $\{u_{\epsilon}\} \subset \mathcal{U}_f$ so that
\begin{equation*}
u_{\epsilon}(\zeta_{\epsilon})+\epsilon \geq \overline{H}(\zeta_0).
\end{equation*}
Equation \eqref{eq:inf on ball} gives that for small $\epsilon<\sigma/2$, we have
\begin{equation*}
\inf_{\partial B_{r}(\zeta_0)}(u_{\epsilon}-\phi) > (u_{\epsilon}-\phi)(\zeta_{\epsilon}).
\end{equation*}
Thus there are points $\eta_{\epsilon} \in B_r(\zeta_0)$ where $u_{\epsilon}-\phi$ attains its minimum over $B_r(\zeta_0)$.

{\bf 1.} If $D\phi(\eta_{\epsilon})\neq 0$,  by definition we have
\begin{equation}
\label{eq:eta super}
\phi_t(\eta_{\epsilon}) -\Delta_{\infty}^N\phi(\eta_{\epsilon})\geq 0
\end{equation}
since $u_{\epsilon}$ is a supersolution. We can assume that $\eta_{\epsilon} \to \eta_0$ (possibly along a subsequence). Then we get
\begin{align*}
&(\overline{H}-\phi)(\zeta_0) 
\geq (u_{\epsilon}-\phi)(\zeta_{\epsilon})+2\epsilon \\
&\geq (u_{\epsilon}-\phi)(\eta_{\epsilon})+2\epsilon 
\geq (\overline{H}-\phi)(\eta_{\epsilon})+2\epsilon 
\to (\overline{H}-\phi)(\eta_0),
\end{align*}
and hence $\eta_0 = \zeta_0$, and sending $\epsilon \to 0$ in \eqref{eq:eta super}, we get \\
$\phi_t(\zeta_0)-\Delta_{\infty}^N\phi(\zeta_0) \geq 0$.

{\bf 2.} If $D\phi(\eta_{\epsilon})= 0$, by definition we have
\[
\phi_t(\eta_{\epsilon})-\lambda(D^2\phi(\eta_{\epsilon})) \geq 0.
\]
Arguing as above, sending $\epsilon \to 0$, 
\[
\phi_t(\zeta_0)-\lambda(D^2\phi(\zeta_0)) \geq 0,
\]
by the continuity of the mapping $\zeta \to\lambda(D^2\phi(\zeta))$. Thus the Upper Perron solution $\overline{H}$ is a viscosity supersolution.

Theorem 4.3 in \cite{koike}, together with the comparison principle \ref{thm:comparison_general} gives that $\overline{H}$ is a subsolution, as well.

 The proof that the Lower Perron solution is a viscosity solution is similar.
%
%

\section{Barrier functions}
\label{sec:barrier}
\begin{definition}
We say that $\zeta_0 \in \partial \Omega $ is \emph{regular} if 
\begin{equation*}
\lim_{\zeta \to \zeta_0}\underline{H}_f(\zeta) =f(\zeta_0).
\end{equation*}
for every continuous function $f: \partial \Omega \to \mathbb{R}$.
\end{definition}
Note that we instead could have used $\overline{H}_f$, since $\underline{H}_f =-\overline{H}_{-f}$.
\begin{definition}
	\label{def:barrierdefinition}
A function $w$ is a \emph{barrier} at $\zeta_0 \in \partial \Omega$ if
\begin{enumerate}
\item $w >0$ and $w$ is a supersolution in $\Omega$,
\item $\liminf_{\zeta \to \eta}w(\zeta) >0$ for $\eta\neq\zeta_0 \in \partial \Omega$,
\item $\lim_{\zeta \to \zeta_0}w(\zeta) =0.$
\end{enumerate}
\end{definition}
We shall now prove Theorem \ref{thm:barrier}. We state the result again for convenience.
\begin{theorem*}
A boundary point $\zeta_0$ is regular if and only if there exists a barrier at $\zeta_0.$
\end{theorem*}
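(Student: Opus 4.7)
The plan is to prove the two directions separately: a standard "sandwich" argument for the easy $(\Leftarrow)$ direction, and an explicit subsolution construction for the $(\Rightarrow)$ direction.

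For the \emph{if} direction, suppose $w$ is a barrier at $\zeta_0$. I would fix $f \in C(\partial\Omega)$ and $\epsilon > 0$, choose $\delta > 0$ so that $|f(\eta) - f(\zeta_0)| < \epsilon$ for $\eta \in \partial\Omega \cap B_\delta(\zeta_0)$, and exploit compactness of $\partial\Omega \setminus B_\delta(\zeta_0)$: the lower semicontinuous function $\eta \mapsto \liminf_{\zeta \to \eta} w(\zeta)$ attains a strictly positive minimum $m$ there. Choosing $K \ge 2\|f\|_\infty/m$ and setting $\phi := f(\zeta_0) + \epsilon + K w$, I would check that $\phi \in \mathcal{U}_f$: it is a supersolution, because the operator $u_t - \Delta_\infty^N u$ is translation-invariant in $u$ and $1$-homogeneous, so adding constants to and positively scaling a supersolution preserve the property; and $\liminf\phi \ge f$ on $\partial\Omega$ splits into an easy case near $\zeta_0$ (by the $\epsilon$-choice) and a case far from $\zeta_0$ (by $Km$). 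Hence $\overline{H}_f \le \phi$, and letting $\zeta \to \zeta_0$ (where $w \to 0$) yields $\limsup_{\zeta \to \zeta_0}\overline{H}_f \le f(\zeta_0) + \epsilon$. A symmetric argument with $\psi := f(\zeta_0) - \epsilon - Kw$, using that $-w$ is a subsolution (a direct test-function check from Definition \ref{def:viscositydefinition} using $\Delta_\infty^N(-u) = -\Delta_\infty^N u$ and $\lambda(-A) = -\Lambda(A)$), places $\psi$ in $\mathcal{L}_f$ and gives $\liminf_{\zeta \to \zeta_0}\underline{H}_f \ge f(\zeta_0) - \epsilon$. Since $\underline{H}_f \le \overline{H}_f$ and $\epsilon$ is arbitrary, $\underline{H}_f(\zeta) \to f(\zeta_0)$.

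For the \emph{only if} direction, suppose $\zeta_0$ is regular. I would take the specific boundary datum $g(\xi) := |\xi - \zeta_0|$ and set $w := \underline{H}_g$. By Section \ref{sec:perron=viscosity} this $w$ is a viscosity solution, hence a supersolution; regularity with $g(\zeta_0) = 0$ (using $\underline{H}_f = -\overline{H}_{-f}$ to shuttle between $\overline H$ and $\underline H$ if needed) delivers property (3) of Definition \ref{def:barrierdefinition}. To secure properties (1) and (2), my plan is to exhibit one explicit auxiliary subsolution that vanishes only at $\zeta_0$, namely
\[
v(x,t) := \frac{1}{M + 1}\Bigl[\,|x - x_0|^2 + \tfrac{1}{M}(t - t_0)^2\,\Bigr],\qquad M := \operatorname{diam}(\Omega),\ \zeta_0 = (x_0, t_0).
\]
A direct calculation gives $v_t - \Delta_\infty^N v = \tfrac{2}{M+1}\bigl[\tfrac{t - t_0}{M} - 1\bigr] \le 0$ on $\Omega$ (since $t - t_0 \le M$), and at the degenerate set $\{x = x_0\}$ the corresponding condition $v_t - \Lambda(D^2 v) \le 0$ reduces to the same inequality, so $v$ is a classical and hence viscosity subsolution. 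On $\partial\Omega$ the elementary bounds $|x - x_0|^2 \le M|x - x_0|$ and $\tfrac{1}{M}(t - t_0)^2 \le |t - t_0|$, combined with $|x - x_0|,\,|t - t_0| \le |\xi - \zeta_0|$, yield $v \le g$. Thus $v \in \mathcal{L}_g$ and $w \ge v$ in $\Omega$; since $v(\zeta) > 0$ for every $\zeta \ne \zeta_0$ and $v$ is continuous on $\overline{\Omega}$, this forces $w > 0$ in $\Omega$ and $\liminf_{\zeta \to \eta} w(\zeta) \ge v(\eta) > 0$ for every $\eta \in \partial\Omega \setminus \{\zeta_0\}$.

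The principal obstacle will be in this second direction: regularity of $\zeta_0$ provides no \emph{a priori} control of $\underline{H}_g$ at other boundary points, which may themselves be irregular, so the Perron continuity one would like to use there is unavailable. The explicit subsolution $v$ is the device that circumvents this, furnishing a universal positive lower bound that certifies both the interior strict positivity and the correct $\liminf$ behavior away from $\zeta_0$ without appealing to a strong minimum principle for the normalized $\infty$-heat equation, which is not established in the paper.
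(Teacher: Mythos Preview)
Your proposal is correct and follows essentially the same approach as the paper: the sandwich argument for the \emph{if} direction is identical, and for the \emph{only if} direction both you and the paper build the barrier as a lower Perron solution and certify properties (1)--(2) by comparing against an explicit quadratic subsolution. The only cosmetic difference is that the paper takes the quadratic $\Psi(x,t)=|x-x_0|^2+\epsilon(t-t_0)^2$ (with $\epsilon\,\mathrm{diam}(\Omega)<1$) simultaneously as the boundary datum and as the comparison subsolution, whereas you separate the roles by using $g(\xi)=|\xi-\zeta_0|$ as datum and a rescaled quadratic $v\le g$ for the comparison---the mechanism is the same.
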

\begin{proof}[Proof of Theorem \ref{thm:barrier}]
This is a classical proof. Assume first that there exists a barrier $w$ at $\zeta_0$. Let $\epsilon >0$. By continuity
 $|f(\zeta)-f(\zeta_0)|<\epsilon$ whenever $|\zeta-\zeta_0|<\delta.$  choose a constant $M$ so that
\begin{equation*}
Mw(\zeta_0)\geq 2||f||_{\infty},
\end{equation*}
whenever $|\zeta-\zeta_0|\geq\delta.$ Consequently, the function $Mw+\epsilon+f(\zeta_0)$ is in the upper class $\mathcal{U}_f$, and has the limit $f(\zeta_0)+\epsilon$ as $\zeta \to \zeta_0$, while $-Mw-\epsilon +f(\zeta_0)\in \mathcal{L}_f$ and has the limit $f(\zeta_0)-\epsilon$. Thus
\[
-Mw(\zeta)-\epsilon +f(\zeta_0) \leq \underline{H}_f(\zeta) \leq \overline{H}_f(\zeta) \leq Mw(\zeta)+\epsilon+f(\zeta_0),
\]
or
\begin{align*}
&|\underline{H}_f(\zeta)-f(\zeta_0)|\leq \epsilon + Mw(\zeta), \\ &|\overline{H}_f(\zeta)-f(\zeta_0)|\leq \epsilon + Mw(\zeta).
\end{align*}
Since $w(\zeta) \to 0$ as $\zeta \to \zeta_0$, we see that $\underline{H}_f(\zeta)=\overline{H}_f(\zeta) \to f(\zeta_0)$ as $\zeta \to \zeta_0$.This concludes this direction.

For the other direction, assume that $\zeta_0 =(x_0, t_0)$ is a regular point. We aim at constructing a barrier at $\zeta_0$. To this end, define
\begin{equation*}
\Psi(x, t) =|x-x_0|^2+\epsilon(t-t_0)^2.
\end{equation*}
Assume $x\neq x_0$. Then $D\Psi(x, t)\neq 0$, and it suffices to show that $\Psi$ is a classical supersolution. A quick calculation yields
\begin{equation*}
\Psi_t(x, t)-\Delta^N_{\infty}\Psi(x, t) =2\epsilon(t-t_0)-2
\end{equation*}
so if we choose $\epsilon$ so that
\begin{equation*}
0< \epsilon \cdot \text{diam}(\Omega)<1
\end{equation*}
we get that the above is negative, and so $\Psi$ is a subsolution in $\Omega$.

If $x=x_0$, a similar argument as in the proof of Theorem \ref{thm:exterior sphere} shows that $\Psi$ is a viscosity subsolution even here.

Furthermore, $w=\underline{H}_{\Psi}$ is a barrier at $\zeta_0$ since
\begin{equation*}
\lim_{\zeta \to \zeta_0}w(\zeta) =\Psi(\zeta_0) =0,
\end{equation*} 
by the regularity of $\zeta_0$. This shows that 3) holds. 1) follows from Section \ref{sec:perron=viscosity}, and 2) holds because of the comparison principle \ref{thm:comparison_general}.
\end{proof}
\begin{remark}
The regularity of a boundary point is a very delicate issue. For the heat equation, it can happen that a boundary point is regular for $u_t-\Delta u =0$, while it is irregular for $u_t-\frac{1}{2}\Delta u=0.$ Such a domain can be constructed using Petrovsky's criterion, cf. \cite{watson2012introduction}.  Since \eqref{eq:dirichlet problem} coincides with the heat equation when we consider only one spatial variable,  we get that the same holds true for the $\infty$ -parabolic equation by adding dummy variables.
\end{remark}
For the "other" $\infty$ -heat equation,
\begin{equation}
\label{eq:other}
u_t =\sum_{i, j =1}^{n}u_{x_i}u_{x_ix_j}u_{x_j},
\end{equation}
a similar result holds, but then we need the existence of a \emph{family} of barriers, a single barrier is in general not known to suffice. Indeed, for the related $p$-parabolic equation \eqref{eq:p-parabolic}, it is shown that a single barrier will not suffice if $1<p<2$,  see \cite{Bjorn2015}. 
This difficulty stems from the fact that barriers cannot be multiplied by constants. See the Appendix for more on this.

\hspace{0.5cm}

In the above, we used the existence of a global barrier. In fact, we can do with a local barrier.
\begin{lemma}
	\label{lem:localbarrier}
Let $\zeta_0 \in \partial \Omega$, and let $\tilde{\Omega}$ be a domain such that $\tilde{\Omega}\cap \overline{B}=\Omega \cap \overline{B}$ for some ball $B$ centred at $\zeta_0$. Then there is a barrier in $\Omega$ at $\zeta_0$ precisely when there is a barrier in $\tilde{\Omega}$.
\end{lemma}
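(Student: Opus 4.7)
The lemma is symmetric in $\Omega$ and $\tilde\Omega$, so the plan is to prove the nontrivial direction: given a barrier $w$ at $\zeta_0$ in $\tilde\Omega$, construct one in $\Omega$. The guiding idea is the classical truncate-and-extend trick from potential theory: cap $w$ by a small positive constant $m$ and extend it by the constant $m$ outside $B$. The resulting function $\tilde w$ is then defined on all of $\Omega$, coincides with $w$ very close to $\zeta_0$ (so that it still vanishes there), and is bounded below by $m>0$ on the rest of $\Omega$.

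First I would fix a concentric open ball $B'$ with $\overline{B'}\subset B$ and set
\[
m\ :=\ \inf\bigl\{w(\zeta)\,:\,\zeta\in\Omega\cap(\overline{B}\setminus B')\bigr\}.
\]
The positivity $m>0$ follows from a compactness argument on $\overline{\Omega}\cap(\overline{B}\setminus B')$: at interior points of $\tilde\Omega$ the lower semicontinuity of the supersolution $w$ gives local positivity, while at boundary points $\eta\neq\zeta_0$---which lie in $\partial\tilde\Omega$ by the local identification $\Omega\cap\overline{B}=\tilde\Omega\cap\overline{B}$---the barrier condition $\liminf w>0$ suffices. Then I would define
\[
\tilde w(\zeta)\ :=\ \begin{cases}\min\{w(\zeta),m\}, & \zeta\in\Omega\cap\overline{B},\\ m, & \zeta\in\Omega\setminus B,\end{cases}
\]
noting that the two clauses agree on $\Omega\cap\partial B$ because $w\geq m$ there.

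Next I would verify the three conditions of Definition \ref{def:barrierdefinition}. Positivity is immediate. The limit at $\zeta_0$ follows from $\tilde w=\min(w,m)$ near $\zeta_0$ together with $w(\zeta)\to 0$. For $\eta\in\partial\Omega\setminus\{\zeta_0\}$ the liminf splits into the case $\eta\notin\overline{B}$, where $\tilde w\equiv m$ in a neighbourhood, and $\eta\in\overline{B}$, where $\eta\in\overline{\tilde\Omega}$ and the barrier property of $w$ (or interior positivity combined with lower semicontinuity) yields a positive liminf. For the supersolution property I would argue locally: at interior points of $B$ one uses that the minimum of two viscosity supersolutions is again a supersolution---a standard fact for the normalised $\infty$-Laplace operator that is already implicit in the parabolic modification used in Section \ref{sec:perron}---and at points of $\Omega\setminus\overline{B}$ the function is constant.

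The main obstacle, as with any such gluing argument, is the supersolution property on $\Omega\cap\partial B$, and the reason for introducing the intermediate ball $B'$ is precisely to sidestep it. At any $\zeta^\ast\in\Omega\cap\partial B$ a sufficiently small neighbourhood $U\subset\Omega$ is disjoint from $\overline{B'}$, so on $U$ one has $w\geq m$ wherever $w$ is defined, and hence $\tilde w\equiv m$ on $U$ by construction; the supersolution property there is then trivial. This reduces the nontrivial verification to points strictly inside $B$ and strictly outside $\overline{B}$, where the minimum-of-supersolutions and constancy arguments apply directly.
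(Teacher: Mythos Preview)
Your proof is correct and follows the same classical truncate-and-extend construction as the paper: define $m$ as a positive lower bound for $w$ away from $\zeta_0$, set $\tilde w=\min(w,m)$ inside the ball and $\tilde w=m$ outside, and verify the barrier conditions. Your introduction of the intermediate ball $B'$ is a mild refinement---the paper simply takes $m=\inf_{\partial B\cap\tilde\Omega}w$ and asserts the result---but it makes the supersolution check near $\partial B$ cleaner by forcing $\tilde w\equiv m$ in a full neighbourhood there, rather than relying on the (also valid) observation that any test function touching $\tilde w\le m$ from below at a point where $\tilde w=m$ must itself have a local maximum.
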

\begin{proof}
Let $w$ be a barrier at $\zeta_0$ in $\tilde{\Omega}$. Define
\begin{equation*}
m=\inf\{w(\zeta) : \zeta \in \partial B \cap \tilde{\Omega}\}. 
\end{equation*}
Then the function
\begin{equation*}
v = \begin{cases}
\min(w, m) \ &\text{in} \ B \cap \tilde{\Omega} \\
m \ &\text{in} \ \Omega \setminus B
\end{cases}
\end{equation*}
is easily seen to be a barrier in $\Omega$. Indeed, since $w$ is assumed to be a barrier in $\tilde{\Omega}$, $\left.w\right|_{\tilde{\Omega}}> 0$ by the definition, and therefore $m>0$ and $v>0$. Since $\tilde{\Omega}\cap \overline{B}=\Omega \cap \overline{B}$ we see that $\liminf_{\eta \to \zeta}v(\eta)=\liminf_{\eta \to \zeta}w(\eta)>0$ on $\partial \tilde{\Omega} \cap \overline{B}$ and $\liminf_{\eta \to \zeta}v(\eta)=m>0$ elsewhere. At last, $\lim_{\zeta \to \zeta_0}v(\zeta)=\lim_{\zeta \to \zeta_0}w(\zeta)=0$.  
\end{proof}
From this we get the following useful corollary.
 \begin{corollary}
 	\label{cor:irregular}
 	Let $\tilde{\Omega} \subset \Omega$, and let $\zeta_0$ be a common boundary point. If $\zeta_0$ is \emph{not} a regular point for $\tilde{\Omega}$, then it is not a regular point for $\Omega$.
 \end{corollary}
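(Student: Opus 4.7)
The plan is to prove the contrapositive: if $\zeta_0$ is a regular point for $\Omega$, then it is also regular for $\tilde{\Omega}$. By Theorem~\ref{thm:barrier}, regularity is equivalent to the existence of a barrier, so the task reduces to manufacturing a barrier at $\zeta_0$ in $\tilde{\Omega}$ out of a given barrier $w$ at $\zeta_0$ in $\Omega$. The natural candidate is simply the restriction $\tilde{w} := w|_{\tilde{\Omega}}$, and the whole proof consists in verifying that it satisfies the three items of Definition~\ref{def:barrierdefinition}.

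Positivity of $\tilde{w}$ on $\tilde{\Omega}$ is inherited from $\Omega$ since $\tilde{\Omega}\subset \Omega$, and the supersolution property transfers to the subdomain because being a viscosity supersolution is local: the test conditions only involve the behaviour of $w-\psi$ in arbitrarily small neighbourhoods of touching points, and such neighbourhoods lie in $\Omega$. The vanishing condition $\lim_{\zeta\to\zeta_0}\tilde{w}(\zeta)=0$ also passes immediately from $\Omega$ to $\tilde{\Omega}$. Only item~(2) requires care: one must check that $\liminf_{\zeta\to\eta,\,\zeta\in\tilde{\Omega}} w(\zeta)>0$ for every $\eta\in\partial\tilde{\Omega}$ with $\eta\neq\zeta_0$. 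I would split into two cases. If $\eta\in\partial\Omega$, then taking the $\liminf$ through the smaller set $\tilde{\Omega}$ can only increase it compared with $\Omega$, so the bound follows directly from the corresponding barrier property of $w$ in $\Omega$. If instead $\eta$ lies in the interior of $\Omega$, then $w(\eta)$ is a genuine pointwise value with $w(\eta)>0$, and the lower semicontinuity of $w$ (which is part of the definition of a viscosity supersolution) gives $\liminf_{\zeta\to\eta} w(\zeta)\geq w(\eta)>0$.

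The main, and essentially only, obstacle is this second subcase of condition~(2), since points of $\partial\tilde{\Omega}$ interior to $\Omega$ are not controlled by the barrier hypothesis of $w$ at all. Fortunately the built-in lower semicontinuity of supersolutions dispatches it for free, so no additional construction is needed. Once all three items are verified, $\tilde{w}$ is a barrier at $\zeta_0$ in $\tilde{\Omega}$, Theorem~\ref{thm:barrier} yields regularity of $\zeta_0$ for $\tilde{\Omega}$, and the contrapositive is established.
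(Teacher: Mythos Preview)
Your proposal is correct and follows the same overall strategy as the paper: argue by contrapositive and invoke Theorem~\ref{thm:barrier} to reduce to transferring a barrier from $\Omega$ to $\tilde{\Omega}$. The paper dispatches this transfer in one line by appealing to Lemma~\ref{lem:localbarrier}, whereas you carry out the verification directly by restricting $w$ to $\tilde{\Omega}$ and checking the three items of Definition~\ref{def:barrierdefinition}. Your case split for condition~(2) --- distinguishing $\eta\in\partial\Omega$ from $\eta$ interior to $\Omega$ and using lower semicontinuity in the latter case --- is exactly the content one would need to make the paper's appeal to Lemma~\ref{lem:localbarrier} fully rigorous here (note that the hypothesis $\tilde{\Omega}\cap\overline{B}=\Omega\cap\overline{B}$ of that lemma is not assumed in the corollary), so your argument is in fact a bit more self-contained.
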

\begin{proof}
Let $\tilde{\Omega} \subset \Omega$, and let $\zeta_0$ be an irregular boundary point for $\tilde{\Omega}$. Assume that $\zeta_0$ is regular for $\Omega$. Then Theorem \ref{thm:barrier} gives that there exists a barrier, $w$, in $\Omega$. Lemma \ref{lem:localbarrier} implies the existence of a barrier in $\tilde\Omega$, contradicting the irregularity of $\zeta_0$.
\end{proof}
\begin{theorem}[Exterior sphere]
	\label{thm:exterior sphere}
Let $\zeta_0 =(x_0, t_0) \in \partial \Omega$, and suppose that there exists a closed ball
 \[
\{(x, t) \ | \ |x-x'|^2+(t-t')^2 \leq R_0^2\}
\] 
intersecting $\overline{\Omega}$ precisely at $\zeta_0$. Then $\zeta_0$ is regular, if the intersection point is \underline{not} the south pole, that is $(x_0, t_0)  \neq (x', t'-R_0)$.

If the point of contact is the north pole, i.e  $(x, t)=(x', t'+R_0)$, the result is true if the intersecting ball has radius $R_0 \geq 1$.
\end{theorem}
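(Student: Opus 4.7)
The plan is to exhibit an explicit radial barrier at $\zeta_0$ based on the Euclidean distance to the centre of the exterior ball, and then invoke Theorem~\ref{thm:barrier} together with Lemma~\ref{lem:localbarrier}. Writing $\tau = t-t'$, $\rho = |x-x'|$ and $r = \sqrt{\rho^{2}+\tau^{2}}$, try
\[
w(x,t) \;=\; R_{0}^{-\alpha} - r(x,t)^{-\alpha}
\]
for a parameter $\alpha>0$ to be specified. Since the closed ball meets $\overline{\Omega}$ only at $\zeta_{0}$, this $w$ is continuous, vanishes only at $\zeta_{0}$ and is strictly positive elsewhere, giving conditions (2) and (3) of Definition~\ref{def:barrierdefinition} for free. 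The content of the proof is the viscosity supersolution inequality.

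A direct radial computation, valid whenever $Dw\neq 0$ (equivalently $\rho\neq 0$), yields
\[
w_{t} - \Delta_{\infty}^{N} w \;=\; \alpha\,r^{-\alpha-4}\bigl[\rho^{2}(\tau+\alpha+1) + \tau^{2}(\tau-1)\bigr].
\]
Using $\rho_{0}^{2}+\tau_{0}^{2}=R_{0}^{2}$, the bracket at $\zeta_{0}$ is $(R_{0}^{2}-\tau_{0}^{2})(\tau_{0}+\alpha+1) + \tau_{0}^{2}(\tau_{0}-1)$. If $\zeta_{0}$ is neither pole, then $\rho_{0}>0$ and the bracket is made strictly positive by choosing $\alpha$ large; continuity then keeps it non-negative in a small neighbourhood $U$ of $\zeta_{0}$ in $\Omega$, within which $\rho$ stays bounded away from $0$. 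At the south pole, $\tau_{0}=-R_{0}$ reduces the bracket to $-R_{0}^{2}(R_{0}+1)<0$ regardless of $\alpha$, which is precisely why that case must be excluded. At the north pole, $\tau_{0}=R_{0}$ and the bracket collapses to $R_{0}^{2}(R_{0}-1)$, non-negative exactly under the hypothesis $R_{0}\geq 1$.

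The more delicate step---and the main obstacle---is the case $Dw=0$, which occurs on the axis $\rho = 0$. If $\zeta_{0}$ is not a pole, the neighbourhood $U$ avoids the axis and there is nothing further to check. At the north pole, however, $\zeta_{0}$ itself lies on the axis, so one must verify the second alternative of Definition~\ref{def:viscositydefinition}. A short calculation gives $D^{2}w\bigr|_{\rho=0}=(g'(r)/r)\,I$ with $g(r)=R_{0}^{-\alpha}-r^{-\alpha}$, so that $\lambda(D^{2}w)=g'(r)/r>0$ and
\[
w_{t} - \lambda(D^{2}w) \;=\; \frac{g'(r)}{r}\,(\tau-1),
\]
non-negative exactly when $\tau\geq 1$. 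On the axis near the north pole, exteriority to the ball forces $\tau = r \geq R_{0} \geq 1$, so the inequality holds.

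Combining both regimes, $w$ is a viscosity supersolution on a neighbourhood of $\zeta_{0}$ in $\Omega$. Truncation as in Lemma~\ref{lem:localbarrier} promotes $w$ to a barrier on all of $\Omega$, and Theorem~\ref{thm:barrier} then yields the regularity of $\zeta_{0}$. The principal technical difficulty is the vanishing-gradient analysis at the north pole, where the second alternative of the viscosity definition kicks in; this is the sole reason the radius restriction $R_{0}\geq 1$ appears there, while the sign obstruction at the south pole is genuine and cannot be removed by rescaling $\alpha$.
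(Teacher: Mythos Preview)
Your proof is correct and follows essentially the same strategy as the paper's: construct a radial barrier depending only on the Euclidean distance $r$ to the centre of the exterior ball, verify the classical supersolution inequality off the axis $\rho=0$, and handle the axis via the second alternative of Definition~\ref{def:viscositydefinition}. The only difference is cosmetic---the paper uses the profile $w=e^{-aR_0^{2}}-e^{-aR^{2}}$ with large $a$, whereas you use $w=R_0^{-\alpha}-r^{-\alpha}$ with large $\alpha$---and both choices lead to the same exclusion of the south pole and the same restriction $R_0\geq 1$ at the north pole; one minor expository point is that the constraint $R_0\geq 1$ is not \emph{solely} forced by the vanishing-gradient case, since your bracket $\rho^{2}(\tau+\alpha+1)+\tau^{2}(\tau-1)$ also degenerates to $R_0^{2}(R_0-1)$ as $\rho\to 0$ in the classical regime.
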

\begin{proof}
We shall construct a suitable barrier at $\zeta_0$. Define
\begin{equation*}
w(x, t) =\mathrm{e}^{-aR_0^2} -\mathrm{e}^{-aR^2},
\end{equation*}
with $R^2 = |x-x'|^2+(t-t')^2 $, and $a>0$ a constant to be chosen later. Clearly, $w(x_0, t_0)=0$, and $w(x, t)>0$ if $\partial\Omega\ni(x, t)\neq (x_0, t_0)$. Close to $\zeta_0$ we have that 
\begin{equation*}
\delta< |x-x'| \ \text{and} \ -2R_0 \leq t-t'. 
\end{equation*}
First we show that $w$ is a viscosity supersolution in $\Omega$.
A calculation yields
\begin{equation}
\label{eq:w-derivatives}
\begin{split}
&Dw(x, t) = 2a\mathrm{e}^{-aR^2}(x-x'),\\
&\ w_t(x, t)=2a\mathrm{e}^{-aR^2}(t-t'), \\
&D^2w(x, t)=2a\mathrm{e}^{-aR^2}(\mathbb{Id}_n-2a(x-x')\otimes(x-x')).
\end{split} 
\end{equation}
{\bf 1.} Assume that $x\neq x'$. Then \eqref{eq:w-derivatives} shows that $Dw(x, t)\neq 0$, and since $w$ is smooth we only need to check that $w$ is a classical supersolution. To this end we see
\begin{align*}
 w_t-\Delta_{\infty}^Nw &=2a\mathrm{e}^{-aR^2}[(t-t')-1+2a|x-x'|^2] \\
 &\geq 2a\mathrm{e}^{-aR^2}[(t-t')-1 +2a\delta^2] \\
 &\geq 2a\mathrm{e}^{-aR^2}[-2R_0-1+2a\delta^2].
\end{align*}
Choosing $a$ so that $2a\delta^2 \geq 2R_0+1$ ensures that the above is positive, and hence $w$ is a supersolution, and  Theorem \ref{thm:barrier} gives that $\zeta_0$ is regular.

{\bf 2.} When the contact point is the north pole, i.e. $(x_0, t_0) =(x', t'+R_0)$, special consideration is needed. It may happen that $x=x'$, and therefore $Dw=0$ in a neighbourhood of $(x_0, t_0)$. For the points where $x\neq x'$, we calculate
\begin{align*}
 w_t-\Delta_{\infty}^Nw &=2a\mathrm{e}^{-aR^2}[(t-t')-1+2a|x-x'|^2] \\
 &\geq 2a\mathrm{e}^{-aR^2}[(t-t')-1]. 
 \end{align*}
For this to be positive, we must demand that $(t-t')>1$ at the intersection point, and this implies that the sphere must have radius $R_0$ greater than 1. If this restriction on the radius can be circumvented is unknown.

On the other hand, if $x=x'$ we need to show that for every $\phi\in C^2(\Omega)$ touching $w$ from below at $(x', t)$ we have
\begin{equation}
\label{eq:w-visc-super}
\phi_t(x', t) \geq \lambda(D^2\phi(x', t)).
\end{equation}
We make the following \emph{antithesis}: Assume there is a $\phi$ that touches $w$ from below at $(x', t)$ but
\begin{equation}
\label{eq:w-antithesis}
\phi_t(x', t) < \lambda(D^2\phi(x', t)).
\end{equation}
Since $\phi$ is touching from below, we get
\begin{align*}
\phi_t(x', t)=u_t(x', t), \ D\phi(x', t)=Du(x', t), \ D^2u(x', t) \geq D^2\phi(x', t).
\end{align*}
This implies, for any $z\in \mathbb{R}^n$
\begin{align*}
\langle D^2wz, z\rangle \geq \langle D^2\phi z, z\rangle \geq \lambda(D^2\phi)|z|^2> \phi_t|z|^2
\end{align*}
at $(x', t)$. Inserting $D^2w(x', t)=2a\mathrm{e}^{-aR^2}\mathbb{Id}_n$, we see
\[
2a\mathrm{e}^{-aR^2}|z|^2>\phi_t(x', t)|z|^2,
\]
and dividing the inequality by $|z|^2$ and inserting $\phi_t(x',t)=w_t(x', t)=2a\mathrm{e}^{-aR^2}(t-t')$, we get
\[
2a\mathrm{e}^{-aR^2}>2a\mathrm{e}^{-aR^2}(t-t'),
\]
a contradiction, since we demanded that $t-t'>1$. Hence \eqref{eq:w-visc-super} holds, and $w$ is a viscosity supersolution.
\end{proof}
We stress that it was strictly necessary to exclude the south pole. To see why this is true, let $\zeta_0 =(x_0, t_0)  = (x', t'-R_0).$ Close to $\zeta_0$ we have $0 <|x-x'| <\sigma$ and $t-(t'-R_0) <0. $ Now,
\begin{align*}
w_t-\Delta_{\infty}^Nw &=2a\mathrm{e}^{-aR^2}[(t-t')-1+2a|x-x'|^2] \\
&<2a\mathrm{e}^{-aR^2}[-R_0-1+2|x-x'|^2] \\
&<2a\mathrm{e}^{-aR^2}[-R_0-1+2a\sigma^2] <0
\end{align*}
for $\sigma$ small enough.

Another way to see this, is to consider the Dirichlet problem on the cylinder $\Omega = Q_T =Q\times (0, T)$, and suppose that $f : \partial \Omega \to \mathbb{R}$ is continuous. Theorem \ref{thm:comparison} and \ref{thm:existence} gives the existence of a unique viscosity solution $h$ in $\Omega$.

Now construct the upper and lower Perron solutions $\overline{H}_f$ and $\underline{H}_f$. Since both are solutions in $\Omega$, uniqueness gives that $\underline{H}_f = \overline{H}_f=h$, regardless of what values we choose at that part of the boundary where $t=T$. Indeed, $h$ itself need not be in either the upper or lower class, because we may not have either $h>f$ or $h<f$ on the plane $t=T$. However, if we define
\begin{equation*}
\tilde{h} =h(x, t)+\frac{\epsilon}{T-t},
\end{equation*}
we see that
\begin{equation*}
\tilde{h}_t-\Delta_{\infty}^N\tilde{h} =0+\frac{\epsilon}{(T-t)^2},
\end{equation*}
so $\tilde{h}$ is in $\mathcal{U}_f$ for $\epsilon >0$, and in $\mathcal{L}_f$ for $\epsilon <0$. 

%
%
\begin{example}[$\infty$- Heat Balls]
	\label{ex:balls}
	Juutinen \& Kawohl in \cite{Juutinen2006}  describe the solution
	\begin{equation}
	\label{eq:fundamental}
	W(x, t) = \frac{1}{\sqrt{t}}\mathrm{e}^{-\frac{|x|^2}{4t}}
	\end{equation}
	to the $\infty$-heat equation, and compare it to the fundamental solution 
	\begin{equation*}
	H(x, t) = \frac{1}{(4\pi t)^{n/2}}\mathrm{e}^{-\frac{|x|^2}{4t}}.
	\end{equation*}
	to the heat equation,
	Since the classical heat balls are defined by the level sets
	\begin{equation*}
	H(x_0-x, t_0-t)> c,
	\end{equation*}
	one can surmise that the $\infty$- heat balls are defined by
	\begin{equation}
	\label{eq:heat balls}
	W(x_0-x, t_0-t)> c.
	\end{equation}
	The centre $(x_0, t_0)$ is an irregular boundary point of the heat ball, see for example \cite{watson2012introduction}. The Petrovsky criterion shows that the same is true for the $\infty$ -parabolic balls.
	
	We can assume that $(x_0, t_0) = (0, 0)$. Equation \eqref{eq:heat balls} gives the inequality
	\begin{equation*}
	\mathrm{e}^{-\frac{|x|^2}{4(-t)}} >c\sqrt{-t},
	\end{equation*}
	which is equivalent to
	\[
	|x|^2<t(\log{|t|}+\log{c})
	\]
	This domain encircles the domain described in the Petrovsky criterion, Theorem \ref{thm:not regular}, and hence the origin is not a regular point.
\end{example}
\subsection{The Future cannot influence the present}
Fix a boundary point $(x_0, t_0)$, and define 
\begin{align*}
&\Omega_- =\{(x, t) \in \Omega : t_0>t\}, \\ 
&\Omega_+ =\{(x, t) \in \Omega : t_0<t\}.
\end{align*}

\begin{theorem}
\label{thm:future}
Let $\zeta_0 = (x_0, t_0) \in \partial \Omega$. Then $\zeta_0$ is a regular boundary point of $\Omega$ if and only if $\zeta_0$ is a regular boundary point of $\Omega_-$, or $\zeta_0 \notin \partial \Omega_-$.
\end{theorem}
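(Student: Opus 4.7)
The plan is to combine Corollary \ref{cor:irregular} with explicit local-barrier constructions from the characterization in Theorem \ref{thm:barrier} and Lemma \ref{lem:localbarrier}. The forward implication is almost immediate: since $\Omega_- \subset \Omega$ and $\zeta_0$ is a common boundary point whenever $\zeta_0 \in \partial \Omega_-$, Corollary \ref{cor:irregular} forces irregularity for $\Omega$ as soon as $\zeta_0$ fails to be regular for $\Omega_-$; contrapositively, regularity for $\Omega$ gives regularity for $\Omega_-$ in that case, while the situation $\zeta_0 \notin \partial \Omega_-$ automatically satisfies the right-hand disjunct.

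For the backward implication I first treat the case $\zeta_0 \notin \partial \Omega_-$. Openness of $\Omega$ produces a ball $B$ around $\zeta_0$ with $\Omega \cap B \subset \{t > t_0\}$, since any interior point of $\Omega$ at height $t_0$ would be a limit of points in $\Omega_-$. On such a neighbourhood the explicit function
\[
w(x,t) \;=\; (t - t_0) + \varepsilon\,|x - x_0|^2, \qquad 0 < \varepsilon < \tfrac{1}{2},
\]
is a viscosity supersolution on $\mathbb{R}^{n+1}$: where $Dw \neq 0$ it is classical with $w_t - \Delta_\infty^N w = 1 - 2\varepsilon > 0$, while on the line $x = x_0$ the eigenvalue check $w_t = 1 \geq 2\varepsilon = \Lambda(D^2 w)$ applies, exactly as in the north-pole analysis in the proof of Theorem \ref{thm:exterior sphere}. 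It vanishes at $\zeta_0$ and is strictly positive on $\Omega \cap B \setminus \{\zeta_0\}$, so Lemma \ref{lem:localbarrier} upgrades it to a global barrier for $\Omega$ at $\zeta_0$.

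In the remaining backward case, $\zeta_0 \in \partial \Omega_-$ is regular for $\Omega_-$, and Theorem \ref{thm:barrier} supplies a barrier $w_-$ for $\Omega_-$ at $\zeta_0$. If moreover $\zeta_0 \notin \partial \Omega_+$, an analogous openness argument confines $\Omega$ to $\Omega_-$ near $\zeta_0$, and $w_-$ restricted locally is the desired barrier. Otherwise $\zeta_0 \in \partial \Omega_+ \cap \partial \Omega_-$, and I propose to glue $w_-$ with the "future" supersolution $w_+(x,t) = (t - t_0) + \varepsilon|x - x_0|^2$ from the previous paragraph by setting
\[
W(x,t) \;=\; \begin{cases} w_-(x,t), & (x,t) \in \Omega_-, \\ w_+(x,t), & (x,t) \in \Omega \cap \{t \geq t_0\}, \end{cases}
\]
and passing to the lower semicontinuous envelope on the interior slice $\Omega \cap \{t = t_0\}$. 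The main obstacle is verifying that $W$ is a viscosity supersolution across this interface: at an interior slice point $(y,t_0)$ with $y \neq x_0$, a test function $\phi \in C^2$ touching $W$ from below must obey $\phi \leq w_+$ on the future side, which constrains $D_x\phi(y,t_0) = 2\varepsilon(y-x_0)$ and $D_x^2\phi(y,t_0) \leq 2\varepsilon I$ (hence $\Delta_\infty^N \phi \leq 2\varepsilon$), while the past-side bound $\phi \leq w_-$ together with lower semicontinuity must be shown to force the compatible inequality $\phi_t(y,t_0) \geq 2\varepsilon$ so that $\phi_t - \Delta_\infty^N \phi \geq 0$. A potentially cleaner route bypassing the explicit gluing is to take $W$ to be the lower Perron solution on $\Omega$ with boundary data $\Psi(x,t) = |x-x_0|^2 + \varepsilon(t-t_0)^2$, which is automatically a viscosity solution by Section \ref{sec:perron=viscosity}, and then use $w_-$ and $w_+$ as comparison supersolutions in $\Omega_-$ and in $\Omega \cap \{t \geq t_0\}$ respectively to squeeze $W \to 0$ at $\zeta_0$.
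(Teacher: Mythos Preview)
Your forward implication and the case $\zeta_0\notin\partial\Omega_-$ are fine; the explicit local barrier $(t-t_0)+\varepsilon|x-x_0|^2$ is in fact cleaner than the paper's appeal to the exterior sphere condition at this step. (Minor slip: the supersolution test at $x=x_0$ involves $\lambda(D^2w)$, not $\Lambda$, though here all eigenvalues coincide.)

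The real gap is in the main case $\zeta_0\in\partial\Omega_-\cap\partial\Omega_+$. The gluing construction cannot be repaired along the lines you sketch. The barrier $w_-$ is an \emph{arbitrary} positive supersolution in $\Omega_-$ with the right limit at $\zeta_0$, and nothing links its behaviour on the interior slice $\Omega\cap\{t=t_0\}$ to the values of $w_+$. If, say, $\liminf_{\eta\to(y',t_0)}w_-(\eta)>w_+(y',t_0)$ for all $y'$ near some slice point $y\neq x_0$, then
\[
\phi(x,t)=\varepsilon|y-x_0|^2+2\varepsilon(y-x_0)\cdot(x-y)+\varepsilon|x-y|^2-M(t-t_0)
\]
touches the lower semicontinuous envelope of $W$ from below at $(y,t_0)$ for any $M>0$ (one computes $w_+-\phi=(1+M)(t-t_0)$ on the future side, while $\phi<w_-$ on the past side near $(y,t_0)$ by the liminf hypothesis), yet $\phi_t-\Delta_\infty^N\phi=-M-2\varepsilon<0$. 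So $W$ fails to be a supersolution across the slice; the ``compatible inequality $\phi_t\geq 2\varepsilon$'' you hope for simply does not follow from $\phi\leq w_-$.

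Your ``cleaner route'' via $W=\underline{H}_\Psi$ on $\Omega$ is precisely the paper's strategy, but you have not supplied its one nontrivial ingredient. Naively comparing $W$ with $w_-$ (or any supersolution) in $\Omega_-$ hits the same obstacle: Theorem~\ref{thm:comparison_general} requires boundary control on all of $\partial\Omega_-$, which includes $\Omega\cap\{t=t_0\}$, where $W$ is a priori unknown. The paper's device is an \emph{extension trick}: every $u\in\mathcal{U}_\Psi(\Omega_-)$ extends to a member of $\mathcal{U}_\Psi(\Omega)$ by redefining it to be the constant $\sup_{\Omega_-}u$ for $t>t_0-\delta$. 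This forces $W\leq u$ on $\{t\leq t_0-\delta\}$, hence $W$ is bounded above on $\Omega_-$ by the upper Perron solution of $\Psi$ in $\Omega_-$, and regularity of $\zeta_0$ for $\Omega_-$ delivers the past-side limit. Only then is the $\Omega_+$ side treated, by identifying $W|_{\Omega_+}$ with the Perron solution in $\Omega_+$ having data $\Psi$ on $\partial\Omega$ and the now-known values of $W$ on the slice, and using that $\zeta_0$ is automatically regular for $\Omega_+$ as an earliest point.
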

\begin{proof}
The first direction is easy. Assume that $\zeta_0$ is a regular boundary point for $\Omega$. Then there exists a barrier, $\psi$, at $\zeta_0$. Since $\Omega_- \subset \Omega$, $\psi$ is also a barrier in $\Omega_-$, or $\zeta_0\notin \partial\Omega_-$. In particular, condition (2) in Definition \ref{def:barrierdefinition} is satisfied since $\psi>0$ in all of $\Omega$.

For the converse, assume first that $\zeta_0\notin \Omega_-$. Then the exterior sphere condition guarantees that $\zeta_0$ is a regular boundary point for $\Omega$. 

Assume that $\zeta_0 \in \partial \Omega_-$ is a regular boundary point. We show that this implies  $\zeta_0$ is a regular boundary point for all of $\Omega$, by constructing a barrier at $\zeta_0$ for $\Omega.$

Define the boundary values $\Psi: \partial \Omega \to \mathbb{R}$ by
\begin{equation*}
\Psi(x, t) =|x-x_0|^2+\epsilon(t-t_0)^2,
\end{equation*}
where $0 < \epsilon <\frac{1}{\text{diam}(\Omega)}$. Then $\Psi$ is a subsolution, and the Lower Perron solution $\underline{H}_\Psi$ satisfies
\begin{equation*}
\underline{H}_\Psi \geq \Psi \geq 0
\end{equation*}
in $\Omega$. We want to show that 
\begin{equation}
\label{eq:goodlimit}
\lim_{\Omega_- \ni\zeta \to \zeta_0}\underline{H}_\Psi(\zeta)=\Psi(\zeta_0)=0.
\end{equation}
To see this, pick any function $u$ in the upper class for $\Psi$ in $\Omega_-$ (That is, $u$ is a supersolution in $\Omega_-$, and $\liminf_{\zeta \to \nu} u(\zeta)\geq \Psi(\nu)$ for all $\nu\in \partial \Omega_-$). Then, for each $\delta >0$, we have that the function
\begin{equation*}
v= \begin{cases}
\sup_{\zeta \in \Omega_-}u(\zeta), \ &\text{for} \ t>t_0-\delta \\
u, \ &\text{for} \ t\leq t_0-\delta
\end{cases}
\end{equation*}
is in the upper class for $\Psi$ in all of $\Omega.$ Indeed, clearly $\limsup_{\zeta \to \nu}v(\zeta)\geq \Psi(\nu)$ for any $\nu \in \partial \Omega$. 

Furthermore, if $t \leq t_0-\delta$, we have that whenever $\theta \in C^2(\Omega)$ touches $v$ from below at a point $\zeta$, we also have that $\theta $ touches $u$ from below. Hence $\theta_t(\zeta)-\Delta^N_{\infty}\theta(\zeta) \geq 0$. For $t>t_0-\delta$, $v$ is a constant and hence a classical (super)solution, and hence a viscosity supersolution.

It now follows that $\underline{H}_\Psi$ restricted to $\Omega_-$ is precisely the upper Perron solution of $\Psi$ in $\Omega_-$, and hence \eqref{eq:goodlimit} holds, since we assumed that $\zeta_0$ is a regular boundary point of $\Omega_-$.

Since $H_{\Psi}$ is positive and a supersolution in $\Omega_-$, we see that $H_{\Psi}$ is a barrier for $\zeta_0$ in $\Omega_-$. We claim that $H_{\Psi}$ will do as a barrier in all of $\Omega$ as well. It only remains to prove that
\begin{equation}
\label{eq:goodlimit2}
\lim_{\Omega \ni \zeta \to \zeta_0}H_{\Psi}(\zeta) =0
\end{equation}
Let $\zeta_0\in \partial \Omega_+$. Define the function
\begin{equation*}
\phi= 
\begin{cases}
\Psi \ \text{on} \ \partial \Omega \\
H_{\Psi} \ \text{on} \ \Omega \cap \partial \Omega_+.
\end{cases}
\end{equation*}
Then $\phi$ restricted to $\partial \Omega_+$ is continuous at $\zeta_0$. Now let 
\begin{equation*}
h=\underline{H}_{\phi}
\end{equation*}
be the lower Perron solution of $\phi$ in $\Omega_+$. We claim that $h=H_{\Psi}$. To see this, pick a $u\in \mathcal{L}_\Psi(\Omega)$. Then $u$ restricted to $\Omega_+$ is in the Lower Class $\mathcal{L}_{\phi}(\Omega_+)$, and so
\begin{equation*}
\left.u \right |_{\Omega_+} \leq h,
\end{equation*}
and hence $H_{\Psi} \leq h$ in $\Omega_+$. To prove that $H_{\Psi} \geq h$, pick a $v\in \mathcal{L}_{\phi}(\Omega_+)$. Then we have that 
\begin{equation*}
\limsup_{\eta \to \zeta}v(\eta) \leq \phi(\zeta) \leq \liminf_{\eta \to \zeta}H_{\Psi}
\end{equation*}
for $\zeta \in \partial \Omega_+$, and so the comparison principle gives that $v\leq H_{\Psi}$ in $\Omega_+$.Hence $h \leq H_{\Psi}$, and we have proven that $h=H_{\Psi}$ in $\Omega_+$.

The exterior sphere condition gives that the earliest points of $\Omega$ are regular, $\zeta_0$ is a regular boundary point of $\Omega_+$. This implies that
\begin{equation*}
\lim_{\zeta \to \zeta_0}h(\zeta) =\phi(\zeta_0) =0.
\end{equation*}
Since $H_{\Psi}=h$ in $\Omega_+$, this and \eqref{eq:goodlimit} implies \eqref{eq:goodlimit2}.
\end{proof}
\section{Regular points on Cylinders}
\label{sec:regularcylinders}
\noindent We provide the proof of Theorem \ref{thm:regularcylinders}.
The parabolic boundary of the cylinder $Q_T=Q\times (0, T)$ is divided into three parts:
\begin{enumerate}
\item The ``bottom'' $Q\times\{t=0\}$
\item The boundary points of the bottom $\partial Q \times\{t=0\}$
\item The walls $\partial Q \times (0, T]$
\end{enumerate}
For the bottom, we have that if $(x_0, 0)\in Q \times\{t=0\}$, then the function
\begin{equation*}
\Psi(x, t) = |x-x_0|^2+2t
\end{equation*}
is positive in $\overline{Q_T}$ and equal to zero at $(x_0, 0)$. Arguing as in Theorem \ref{thm:exterior sphere}, we can show that $\Psi$ is a viscosity solution in $Q_T$ and hence a barrier. This also shows that points on the boundary of the bottom are regular, since $\Psi$ is a barrier here, as well.


For the walls, we have the following characterization, which is Theorem \ref{thm:regularcylinders}.
\begin{theorem*}
Let $x_0 \in \partial Q$ and $0<t_0<T$. Then $\zeta_0 =(x_0, t_0)$ is a  regular boundary point of $Q_T$.
\end{theorem*}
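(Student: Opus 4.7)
The plan is to identify both Perron solutions with the classical solution $h$ furnished by Theorem \ref{thm:existence}, using the perturbation trick already previewed at the end of Section \ref{sec:barrier}, and then read off regularity at $\zeta_0$ from the continuity of $h$ up to the parabolic boundary. The upshot is that a direct barrier construction is unnecessary here; everything follows from the cylindrical existence theorem combined with the general comparison principle.

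I will begin with a continuous $f:\partial Q_T\to\mathbb{R}$, extended continuously to $\mathbb{R}^{n+1}$ via Tietze so that Theorem \ref{thm:existence} applies. This yields a solution $h$ of the $\infty$-heat equation in $Q_T$ which is continuous up to $\partial_pQ_T$ with $h=f$ on $\partial_pQ_T$. The classical solution $h$ itself need not lie in $\mathcal{U}_f$ or $\mathcal{L}_f$, because we have no control of $h$ on the top $\overline Q\times\{T\}$, which is why the perturbation is essential.

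Next, for $\epsilon>0$ I consider
\[
\tilde h_\epsilon(x,t)\;=\;h(x,t)+\frac{\epsilon}{T-t}.
\]
A direct computation gives $(\tilde h_\epsilon)_t-\Delta^N_\infty\tilde h_\epsilon=\epsilon/(T-t)^2>0$, so $\tilde h_\epsilon$ is a classical (hence viscosity) supersolution on $Q_T$. On $\partial_pQ_T$ the continuity of $h$ and positivity of $\epsilon/(T-t)$ yield $\liminf\tilde h_\epsilon\ge f$, while on $\overline Q\times\{T\}$ we have $\tilde h_\epsilon\to+\infty$. Thus $\tilde h_\epsilon\in\mathcal{U}_f$, so by definition $\overline H_f\le\tilde h_\epsilon$ in $Q_T$; letting $\epsilon\to 0$ gives $\overline H_f\le h$. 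The symmetric choice $h-\epsilon/(T-t)\in\mathcal{L}_f$ gives $\underline H_f\ge h$. Combining with $\underline H_f\le\overline H_f$ from Theorem \ref{thm:comparison_general}, I obtain $\underline H_f=\overline H_f=h$ throughout $Q_T$.

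Finally, since $\zeta_0=(x_0,t_0)\in\partial Q\times(0,T)\subset\partial_pQ_T$ and $h$ is continuous at $\zeta_0$ with $h(\zeta_0)=f(\zeta_0)$, I conclude
\[
\lim_{\zeta\to\zeta_0}\underline H_f(\zeta)\;=\;h(\zeta_0)\;=\;f(\zeta_0),
\]
which is the definition of regularity. This argument works verbatim for every $\zeta_0\in\partial_pQ_T$, not just the wall; the sole use of $0<t_0<T$ is to place $\zeta_0$ where the existence theorem makes $h$ continuous. I do not expect a serious obstacle: the only subtle point is verifying that $\tilde h_\epsilon$ behaves correctly at the top $t=T$, which is precisely what the blow-up $\epsilon/(T-t)$ arranges. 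Note that the alternative route—constructing an explicit barrier at a wall point—is awkward because radial spatial profiles $g(|x-x_0|)$ produce $\Delta^N_\infty g=g''(r)$, forcing sign constraints that conflict with the requirement $w>0$ in $Q_T$ with $w(\zeta_0)=0$; the existence+perturbation route circumvents this entirely.
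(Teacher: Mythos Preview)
Your argument is correct, but it follows a genuinely different route from the paper's.

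The paper proves the lateral case by building an explicit barrier: it first argues (via $p$-harmonic approximation and the Lipschitz extension property) that every $x_0\in\partial Q$ is regular for the stationary $\infty$-Laplacian, then solves $\Delta_\infty^N\nu=-1$ in $Q$ with boundary data $|x-x_0|$ to produce an elliptic barrier $\nu$, sets $v(x,t)=\nu(x)+(t_0-t)$, checks this is a barrier at $(x_0,t_0)$ in $Q\times(0,t_0)$, and finally invokes Theorem~\ref{thm:future} (the future cannot influence the present) to upgrade to regularity in all of $Q_T$.

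Your route is shorter: you take the Juutinen--Kawohl existence theorem at full strength (continuity of $h$ up to $\partial_pQ_T$ with the prescribed values), use the $\epsilon/(T-t)$ perturbation---which the paper itself records at the end of Section~\ref{sec:barrier}---to force $\underline H_f=\overline H_f=h$, and then read off regularity at $\zeta_0$ from continuity of $h$. This bypasses barriers, the inhomogeneous elliptic problem, and Theorem~\ref{thm:future} entirely. The trade-off is that your argument leans on the boundary continuity built into Theorem~\ref{thm:existence} as a black box, whereas the paper's construction would still work even if the existence theorem only delivered an interior solution; the paper's proof also makes the elliptic--parabolic link visible and fits the barrier-characterisation theme.

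One small phrasing issue: $h$ is a viscosity solution, not a classical one, so the line ``$(\tilde h_\epsilon)_t-\Delta_\infty^N\tilde h_\epsilon=\epsilon/(T-t)^2$'' should be read in the viscosity sense. Concretely, if $\phi$ touches $\tilde h_\epsilon$ from below at a point, then $\phi-\epsilon/(T-t)$ touches $h$ from below there; since the added term has zero spatial gradient and Hessian, the supersolution inequality for $h$ transfers to $\tilde h_\epsilon$ with the extra $\epsilon/(T-t)^2>0$. This is exactly what the paper's own informal computation intends, so it is not a gap.
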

\begin{proof}
Since $\infty$-harmonic functions are Lipschitz continuous in $\overline{Q}$, cf. \cite{aronsson}, every boundary point is regular for the $\infty$-Laplace equation \eqref{eq:stationaryeg} To see why, let $p>n$ and $f\in C(\partial Q)$. Fix a point $x_0 \in \partial Q$. By continuity, we can find an $\epsilon >0$ so that $|f(x)-f(x_0)|<\epsilon$ whenever $|x-x_0|<\delta$ and $x \in \partial Q$. Let $u_p$ be the solution of the $p$-Laplace equation
\begin{equation}
\label{eq:p-laplace}
\begin{cases}
\text{div}(Du|Du|^{p-2})=0 \ \text{in} \ Q \\
u_p =f \ \text{on} \ \partial Q
\end{cases}
\end{equation}
 Classical results, see e.g  \cite{Lind_rev}, give that $u_p$ is continuous in $\overline{Q}$.
 Let $M=\max_{x \in \partial Q}{f(x)}$.
 Since $u_p$ enjoys a comparison principle, we get the following inequality in $Q$ where $\lambda = 2M/\delta^{(p-n/(p-1))}$:
 \begin{equation}
 \label{eq:pcomparison}
 f(x_0)-\epsilon -\lambda|x-x_0|^{\frac{p-n}{p-1}} \leq u_p(x)\leq  f(x_0)+\epsilon +\lambda|x-x_0|^{\frac{p-n}{p-1}},
 \end{equation}
 since $f(x_0)-\epsilon -\lambda|x-x_0|^{\frac{p-n}{p-1}}$ and $f(x_0)+\epsilon +\lambda|x-x_0|^{\frac{p-n}{p-1}}$ are respectively sub and supersolutions of \eqref{eq:p-laplace} and since \eqref{eq:pcomparison} holds on $\partial Q$. As $p \to \infty$, the $u_p$ converges to the $\infty-$harmonic function $u_{\infty}$ pointwise in $Q$ and so 
 \[
 f(x_0)-\epsilon -\frac{2M}{\delta}|x-x_0| \leq u_{\infty}(x)\leq  f(x_0)+\epsilon +\frac{2M}{\delta}|x-x_0|.
 \]
 As $x \to x_0$, we get
 \[
 f(x_0)-\epsilon \leq \liminf_{x \to x_0}u_{\infty}(x)\leq \limsup_{x \to x_0}u_{\infty}(x)\leq f(x_0)+\epsilon, 
 \]
 and since $\epsilon$ was arbitrary,
 \[
 \lim_{x \to x_0}u_{\infty}(x)=f(x_0).
  \]
  
 Let $x_0\in \partial Q$. We shall prove that $(x_0, t_0)$ is a regular point for \eqref{eq:dirichlet problem} in $Q_T$.
  Since $x_0$ is regular for the $\infty$-Laplacian, we have for the viscosity solution of the $\infty$-Laplace equation $u_{\infty}$  that
  \[
   \lim_{Q\ni x \to x_0}u_{\infty}(x)=\phi(x_0)
  \]
for every continuous function $\phi$ defined on the boundary $\partial Q$. We shall prove that $(x_0, t_0)$ is regular by constructing a suitable barrier in $Q_T$

 Let $\phi(x)=|x-x_0|$, and let $\nu$ be a solution of 
\begin{equation*}
\begin{cases}
\Delta_{\infty}^N\nu =-1 \ \text{in} \ Q \\
\nu -\phi =0 \ \text{on} \ \partial Q
\end{cases}
\end{equation*}
Then $\nu$ is $\infty$-superharmonic in $Q$, and since $\phi$ is $\infty$-subharmonic, comparison \cite{lu2008inhomogeneous} gives that $\nu(x)\geq |x-x_0|$. We also have that
\begin{equation*}
\lim_{x \to x_0}\nu(x) =\phi(x_0)=0,
\end{equation*}
and hence $\nu$ is a barrier in $Q$ for the \emph{$\infty$-Laplacian}.
Define 
\begin{equation*}
v(x, t)=\nu(x)+(t_0-t)
\end{equation*}
so that 
\begin{equation*}
v_t -\Delta_{\infty}^Nv =0.
\end{equation*}
This implies that $v$ is a barrier at $(x_0, t_0)$ in $Q\times(0, t_0)$, and Theorem \ref{thm:future} gives that $(x_0, t_0)$ is regular in $\Omega.$
%
%
Thus every point on the parabolic boundary of the cylinder $Q_T$ is regular, and Theorem \ref{thm:regularcylinders} is proven.
\end{proof}

\section{The Petrovsky Criterion}
\label{sec:petrowski}
\noindent Here we prove Theorem \ref{thm:petrowski}, repeated for completeness.
\begin{theorem*}
	The origin $(x, t)=(0, 0)$ is a regular point for 
	\begin{equation}
	\label{eq:1}
	u_t - \Delta_{\infty}^N u=0
	\end{equation}
	in the domain enclosed by the hypersurfaces 
	\begin{equation}
	\label{eq:thedomain}
	\{(x, t)\in \mathbb{R}^n\times (-\infty, 0)\ : \ |x|^2 =-4t\log|\log|t||\} \ \text{and} \ \{t=-c\}, 
	\end{equation}
	for a small constant $0<c<1$.
\end{theorem*}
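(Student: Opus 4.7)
The plan is to construct a barrier at $\zeta_0 = (0, 0)$ of the radial form $w(x, t) = V(|x|, t)$ and invoke Theorem~\ref{thm:barrier}. The crucial observation is that the normalised $\infty$-heat equation restricted to radial data is just the one-dimensional heat equation, so Petrovsky's classical 1935 construction can be transplanted directly, with the spatial dimension $n$ playing no role at all.

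First I would establish the radial reduction. For $V(r, t) \in C^2$ with $V_r(0, t) = 0$ for all $t$, set $w(x, t) := V(|x|, t)$. At points $x \neq 0$ with $Dw \neq 0$, a direct calculation gives $\Delta_{\infty}^N w = V_{rr}(|x|, t)$, so $w_t - \Delta_{\infty}^N w = V_t - V_{rr}$. At $x = 0$, $Dw(0, t) = 0$ and the vanishing of $V_r(0, t)$ together with Taylor expansion yields $D^2 w(0, t) = V_{rr}(0, t) I_n$, so all eigenvalues of the spatial Hessian equal $V_{rr}(0, t)$ and the viscosity supersolution condition with vanishing gradient again reads $V_t(0, t) \geq V_{rr}(0, t)$. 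Producing a barrier for \eqref{eq:1} at $\zeta_0$ therefore reduces to producing a classical supersolution $V$ of the one-dimensional heat equation on $\{r \geq 0,\ -c \leq t < 0\}$ that satisfies the three conditions of Definition~\ref{def:barrierdefinition} at $(r, t) = (0, 0)$ for the 1D Petrovsky domain $\{r^2 < -4t\log|\log|t||\}$.

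Next, I would invoke the classical 1D Petrovsky barrier, built from the 1D heat kernel $W(r, \sigma) = \sigma^{-1/2} e^{-r^2/(4\sigma)}$ weighted by a logarithmic factor. A natural candidate is
\[
V(r, t) = \int_0^{|t|} \frac{1 - e^{-r^2/(4\sigma)}}{\sqrt{\sigma}\,(\log(1/\sigma))^{1+\epsilon}}\,d\sigma,
\]
for small $\epsilon > 0$, possibly augmented by a correction term so that $V$ is a supersolution of the 1D heat equation (verified via integration by parts in $\sigma$, exploiting $\partial_\sigma W = \partial_r^2 W$), tends to $0$ at the origin, and has strictly positive liminf at every other boundary point. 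The key heuristic is that on the critical curve $r^2 = -4t\log|\log|t||$ the factor $e^{-r^2/(4|t|)}$ equals $(\log|\log|t||)^{-1}$, which is precisely the matching that keeps $V$ bounded away from zero on the paraboloid while allowing it to vanish at $\zeta_0$. One then verifies condition~(3) of Definition~\ref{def:barrierdefinition} by dominated convergence in the integral, condition~(1) by positivity of the integrand plus the correction, and condition~(2) by the critical-curve estimate together with elementary positivity on the flat bottom $\{t = -c\}$.

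The main obstacle is the simultaneous verification of all three barrier conditions for the chosen $V$: the supersolution inequality and the required limit behaviour at $\zeta_0$ compete, and one must tune the weight exponent $\epsilon$ and the correction term with care. The coefficient $4$ appearing in the Petrovsky curve is precisely the $4$ in the heat kernel $e^{-r^2/(4\sigma)}$; this matching is the signature of the Petrovsky phenomenon and also explains dimension-independence, since the spatial dimension would only enter through the $(4\pi\sigma)^{-n/2}$ normalisation of the heat kernel, which is irrelevant after the radial reduction. Once $V$ is produced, $w(x, t) = V(|x|, t)$ is the sought-after barrier at $\zeta_0$ by the radial reduction, and Theorem~\ref{thm:barrier} delivers the regularity of the origin.
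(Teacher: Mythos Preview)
Your radial reduction is correct and is exactly the mechanism the paper exploits: the barrier in the paper is of the form $\overline{u}(x,t)=f(t)\,e^{-|x|^2/(4t)}+g(t)$, which is radial, and away from $x=0$ the computation is literally $V_t-V_{rr}$. So the conceptual framework of your proposal matches the paper.

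The gap is in the execution. Your candidate
\[
V(r,t)=\int_0^{|t|}\frac{1-e^{-r^2/(4\sigma)}}{\sqrt{\sigma}\,(\log(1/\sigma))^{1+\epsilon}}\,d\sigma
\]
does not work as a barrier without substantial modification, and the phrase ``possibly augmented by a correction term'' is where the actual proof is hiding. Concretely: (i) $V(0,t)=0$ for every $t<0$, so $V$ vanishes along the entire $t$-axis inside $\Omega$ and fails condition~(1) of Definition~\ref{def:barrierdefinition}; (ii) the integrand of $V_{rr}$ behaves like $\sigma^{-3/2}(\log(1/\sigma))^{-(1+\epsilon)}$ near $\sigma=0$ when $r=0$, so $V_{rr}(0,t)=+\infty$ and your $C^2$ argument at $x=0$ collapses; (iii) you never actually check $V_t-V_{rr}\ge 0$, only gesture at integration by parts. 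The whole content of Petrovsky's criterion is the delicate balance between the supersolution inequality and the vanishing at the tip, and that balance has not been struck.

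The paper avoids all of this by writing down a closed-form barrier rather than an integral one: with $f(t)=-\tfrac12|\log|t||^{-(\delta+1)}$ and $g(t)=|\log|t||^{-\delta}$, $\delta=\tfrac14$, one computes $\overline{u}_t-\Delta_\infty^N\overline{u}$ explicitly and checks it is nonnegative, checks positivity in $\Omega$ using $e^{-|x|^2/(4t)}<|\log|t||$ there, and checks the limit at the origin directly. The $x=0$ case is handled as you outline, using that $D^2\overline{u}(0,t)$ is a scalar multiple of the identity. If you want to salvage your approach, replace the integral ansatz by this closed-form one; the verification then becomes a page of elementary calculus rather than an open-ended tuning problem.
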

\begin{proof}[Proof of Theorem \ref{thm:petrowski}]
	We shall construct a barrier function $\overline{u}$ so that
	\begin{enumerate}
		\item $\overline{u} >0$ and $\overline{u}$ is a supersolution in $\Omega$, that is \\ $\overline{u}_t - \Delta_{\infty}^N \overline{u}\geq0$.
		\item $\liminf_{(y, s) \to (x, t)}\overline{u}(y, s) >0$ for $(x, t) \neq (0, 0) \in \partial \Omega$,
		\item $\lim_{(x, t) \to (0, 0)}\overline{u}(x, t) =0.$
	\end{enumerate}
	We will choose a function on the form
	\begin{equation}
	\overline{u}(x, t)=f(t)\mathrm{e}^{\frac{-|x|^2}{4t}} +g(t).
	\end{equation}
	Inserting this into \eqref{eq:1} we get
	\begin{equation}
	\label{eq:mustbepositive}
	\overline{u}_t(x, t)-\Delta_{\infty}^N\overline{u}(x, t) = \mathrm{e}^{\frac{-|x|^2}{4t}}\left(f'(t)+\frac{f(t)}{2t}+g'(t)\mathrm{e}^{\frac{|x|^2}{4t}}\right).
	\end{equation}
	Choose
	\[
	f(t)=-\frac{1}{2}\frac{1}{|\log|t||^{\delta+1}}, \ \ g(t)=\frac{1}{|\log|t||^{\delta}}
	\]
	where $0<\delta \leq 1/4$ is a parameter to be fixed later.
	
	{\bf 1.} 	
We first show that
	\begin{equation}
	\label{eq:barrier}
	\overline{u}(x, t) =-\frac{1}{2}\frac{1}{|\log|t||^{\delta+1}}\mathrm{e}^{\frac{-|x|^2}{4t}}+\frac{1}{|\log|t||^{\delta}}
	\end{equation}
	is positive in the domain \eqref{eq:thedomain}. The definition of the domain implies
	\[
	-\frac{|x|^2}{4t}<\log|\log|t||,
	\]
	and so 
	\[
	\overline{u}(x, t)>-\frac{1}{2|\log|t||^{\delta+1}}\mathrm{e}^{\log|\log|t||} +\frac{1}{|\log|t||^{\delta}} =\frac{1}{2|\log|t||^{\delta}}>0.
	\]
	
	 Then we show that $\overline{u}$ is a viscosity supersolution in the domain. A calculation yields
	\begin{equation}
	\label{eq:gradient}
	Du(x, t)=-x\frac{f(t)}{2t}\mathrm{e}^{\frac{-|x|^2}{4t}},
	\end{equation}
	\begin{equation}
	\label{eq:timederivative}
	u_t(x, t)=\mathrm{e}^{\frac{-|x|^2}{4t}}\left(f'(t)+\frac{|x|^2}{4t^2}\right)+g'(t),
	\end{equation}
	and
	\begin{equation}
	\label{eq:hessian}
	D^2u(x, t)=-\frac{f(t)}{2t}\mathrm{e}^{\frac{-|x|^2}{4t}}\left(\mathbb{Id}_n-\frac{1}{2t}x\otimes x\right),
	\end{equation}
	where $\mathbb{Id}_n$ is the $n\times n$ identity matrix and $x \otimes x$ the tensor product. We see that $Du(x, t)=0$ only when $x=0$. This case needs to be checked separately.
	
	Assume first that $x\neq 0$. Then $Du(x, t)\neq 0$, and, since $u$ is smooth it suffices to show that $u$ is a classical supersolution.
	Differentiating, we see
	\[
	f'(t)=-\frac{\delta+1}{2}\frac{1}{t|\log|t||^{\delta +2}}, \ \ g'(t)=\delta\frac{1}{t|\log|t||^{\delta+1}}.
	\]
	Inserting  the derivatives into \eqref{eq:mustbepositive} we get
	\begin{align*}
	&\overline{u}_t(x, t)-\Delta_{\infty}^N\overline{u}(x, t)  \\
	=&\mathrm{e}^{\frac{-|x|^2}{4t}}\left(-\frac{\delta +1}{2}\frac{1}{t|\log|t||^{\delta +2}}-\frac{1}{2}\frac{1}{2t|\log|t||^{\delta+1}}+\delta\frac{1}{t|\log|t||^{\delta+1}}\mathrm{e}^{\frac{|x|^2}{4t}}\right) \\
	=&\mathrm{e}^{\frac{-|x|^2}{4t}}\frac{1}{t|\log|t||^{\delta+1}}\left(\frac{-(\delta+1)}{2|\log|t||}-\frac{1}{4}+\delta\mathrm{e}^{\frac{|x|^2}{4t}}\right).
	\end{align*}
	Since $t<0$, we have
	\[
	\mathrm{e}^{\frac{|x|^2}{4t}}<1.
	\]
	Hence
	\[
	\overline{u}_t(x, t)-\Delta_{\infty}^N\overline{u}(x, t) >\mathrm{e}^{\frac{-|x|^2}{4t}}\frac{1}{t|\log|t||^{\delta+1}}\left(\frac{-(\delta+1)}{2|\log|t||}-\frac{1}{4}+\delta\right).
	\]
	Setting $\delta =1/4$ ensures that this is positive.
	
	Then assume $x=0$. We need to check that for every $\phi \in C^2(\Omega)$ so that $u-\phi$ has a minimum at $(0, t)$, we have
	\begin{equation}
	\label{eq:viscdef}
	\phi_t(0, t) \geq \lambda(D^2\phi(0, t)).
	\end{equation}
	We first show that $\overline{u}$ itself satisfies \eqref{eq:viscdef}. Inserting $x=0$ into \eqref{eq:hessian} and \eqref{eq:timederivative}, we see that we must have
	\[
	f'(t)+g'(t) \geq \lambda(-\frac{f(t)}{2t}\mathbb{Id}_n) =-\frac{f(t)}{2t} = \frac{1}{4t|\log|t||^{\delta+1}}.
	\]
	Writing out the left hand side, we get 
	\[
	\frac{1}{t|\log|t||^{\delta+1}}\left(-\frac{\delta+1}{2}\frac{1}{|\log|t||}+\delta\right) \geq  \frac{1}{4t|\log|t||^{\delta+1}},
	\]
	which is equivalent to
	\[
	-\frac{\delta+1}{2}\frac{1}{|\log|t||}+\delta \leq \frac{1}{4}.
	\]
	Since $\delta$ was set to  $1/4$, we end up with
	\[
	-\frac{5}{8}\frac{1}{|\log|t||} \leq 0,
	\]
	 and so $\overline{u}$ itself satisfies \eqref{eq:viscdef} in $\Omega$. Let $\phi$ be the test function in Definition \ref{def:viscositydefinition} so that $\overline{u}-\phi$ has a minimum at $(0, t)$. This implies that 
	\[
	D\overline{u}=D\phi, \ \overline{u}_t = \phi_t, \ D^2(\overline{u}-\phi) \geq 0
	\]
	at $(0, t)$. Since $D^2\overline{u}(0, t)$ is a scalar multiple of the identity matrix, $D^2\overline{u} \geq D^2\phi$ implies that $\lambda(D^2\overline{u}(0, t))\geq \lambda(D^2\phi(0, t))$. Hence
	\[
	\phi_t(0, t)=\overline{u}_t(0, t) \geq  \lambda(D^2\overline{u}(0, t)) \geq \lambda(D^2\phi(0, t)) ,
	\]
	and  so \eqref{eq:viscdef} holds. Thus $\overline{u}$ is a viscosity supersolution even in this case.
	
	{\bf 2} We show that $\overline{u}$ satisfies condition (2). $\overline{u}$ is continuous up to $\partial \Omega$ and, since $|x|^2 =-4t\log|\log|t||$ here we have
	\begin{align*}
	&\left.\overline{u}(x, t)\right|_{(|x|^2=-4t\log|\log|t||)}  \\
	=&\left(-\frac{1}{2}\frac{1}{|\log|t||^{\delta+1}}\mathrm{e}^{-\frac{-4t\log|\log|t||}{4t}}+\frac{1}{|\log|t||^{\delta}}\right) \\
	=&\frac{-1}{2|\log|t||^{\delta}} +\frac{1}{|\log|t||^{\delta}} =\frac{1}{2|\log|t||^{\delta}}>0,
	\end{align*}
	for $(x, t)\neq (0, 0)$.
	
	{\bf 3.} Since $f(t), g(t) \to 0$ as $t\to 0^-$, we see that (3) is satisfied. Indeed, since $|x|^2 <-4t\log|\log|t||\to 0$ as $t \to 0^-$, we get
	\[
	\mathrm{e}^{\frac{-|x|^2}{4t}} =\mathcal{O}(|\log|t||) 
	\]
	as $t\to 0^-$. Hence, taking the limit from within $\Omega$,
	\begin{align*}
	&\lim_{(|x|, t) \to (0,0^-)}\overline{u}(x, t)= \\
	  &\lim_{(|x|, t) \to (0,0^-)}\left(-\frac{1}{2}\frac{1}{|\log|t||^{\delta+1}}\mathrm{e}^{\frac{-|x|^2}{4t}}+\frac{1}{|\log|t||^{\delta}}\right)=0.
	\end{align*} 

	Hence \eqref{eq:barrier} satisfies conditions (1), (2) and (3), and therefore is a barrier at $(0, 0)$. This shows that the origin is a regular point for the domain \eqref{eq:thedomain}.
\end{proof}
	\begin{remark}
	One could also deduce $\overline{u}>0$ and condition (2) from the following considerations. Consider the level set $\overline{u}(x, t)=0$. We get
	\begin{align*}
	&-\frac{1}{2}\frac{1}{|\log|t||^{\delta+1}}\mathrm{e}^{\frac{-|x|^2}{4t}}+\frac{1}{|\log|t||^{\delta}}=0, \\
	&-\frac{1}{2}\frac{1}{|\log|t||}\mathrm{e}^{\frac{-|x|^2}{4t}} +1 =0, \\
	&\mathrm{e}^{\frac{-|x|^2}{4t}} =2|\log|t||,
	\end{align*}
	and upon taking logarithms and rearranging;
	\[
	|x|^2=-4t(\log|\log|t||+\log2).
	\]
	This curve includes the domain \eqref{eq:thedomain}.
\end{remark}
 We  now turn to the proof that the constant 4 in the Petrovsky criterion cannot be improved upon. This is Theorem \ref{thm:not regular}.
 \begin{theorem*}
 The origin is \emph{not} a regular point for the domain $\Omega$ bounded by $|x|^2 =-4(1+\epsilon)t\log{|\log{|t|}|}$, $t=-c$ for any $\epsilon >0$.
 \end{theorem*}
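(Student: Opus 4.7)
The plan is to reduce the problem to the classical Petrovsky criterion for the one-dimensional heat equation. Since $\Delta_\infty^N u = u_{rr}$ for radial $u$ (as recalled in the introduction), radial subsolutions of the $\infty$-heat equation in $n$ dimensions correspond to one-dimensional subsolutions of the heat equation. The classical Petrovsky criterion, cf.\ \cite{watson2012introduction} and \cite{petrovsky1935ersten}, states that the origin is \emph{not} a regular boundary point for the heat equation $v_t = v_{x_1 x_1}$ in the one-dimensional domain $\Omega_1 = \{(x_1, t) : x_1^2 < -4(1+\epsilon) t \log|\log|t||,\ -c<t<0\}$: the usual Petrovsky integral converges precisely for $\epsilon>0$. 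I will lift this one-dimensional counter-example radially to exhibit irregularity for the $n$-dimensional $\infty$-heat equation in $\Omega$.

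Concretely, by the classical result there is a continuous $f_1:\partial\Omega_1\to\mathbb{R}$ with $f_1(0,0)=0$ and a viscosity subsolution $v_1$ of the 1-D heat equation on $\Omega_1$ belonging to $\mathcal{L}_{f_1}$ yet satisfying $\limsup_{(x_1,t)\to(0,0)}v_1 > 0$. By reflecting across $x_1=0$ and taking the maximum (still a subsolution, cf.\ Section \ref{sec:perron}), both $f_1$ and $v_1$ may be assumed symmetric in $x_1$, so they depend only on $(|x_1|,t)$. I then define the radial lifts $V(x,t):=v_1(|x|,t)$ on $\Omega$ and $F(x,t):=f_1(|x|,t)$ on $\partial\Omega$. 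Since $\Omega$ and $\Omega_1$ share the same radius profile $r(t)^2=-4(1+\epsilon)t\log|\log|t||$, the function $F$ is continuous with $F(0,0)=0$, and the boundary estimate for $V$ is inherited from the one for $v_1$. To verify that $V$ is an $\infty$-heat subsolution in $\Omega$: at points with $x\neq 0$ the radial identity yields
\[
V_t-\Delta_\infty^N V \;=\; (v_1)_t - (v_1)_{rr} \;\leq\; 0
\]
directly from the 1-D subsolution property. At the axis $x=0$, one uses the viscosity definition: the radial symmetry of $V$ forces $DV(0,\hat t)=0$, so the case $D\Psi(0,\hat t)=0$ must be handled, and the required eigenvalue inequality $\Psi_t(0,\hat t)\leq \Lambda(D^2\Psi(0,\hat t))$ is extracted by feeding the spherically-averaged quadratic Taylor part of $\Psi$ at $x=0$ into the 1-D subsolution property of $v_1$ at $r=0$, in the spirit of Part 2 of the proof of Theorem \ref{thm:petrowski}. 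Hence $V\in\mathcal{L}_F$, so the Perron solution satisfies $\underline{H}_F\geq V$ in $\Omega$, and
\[
\limsup_{\zeta\to(0,0),\,\zeta\in\Omega}\underline{H}_F(\zeta) \;\geq\; \limsup_{\zeta\to(0,0)} V(\zeta) \;>\; 0 \;=\; F(0,0).
\]
Thus $\underline{H}_F$ does not tend to $F(0,0)$ at the origin, so $(0,0)$ is not regular.

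The main obstacle is the viscosity verification for the radial lift at the axis $x=0$, where the gradient necessarily vanishes and the argument must pass through the degenerate eigenvalue condition of Definition \ref{def:viscositydefinition}; this is technically the counterpart of Part 2 of Theorem \ref{thm:petrowski}, and careful symmetric test-function manipulation is needed. A purely calculational alternative—constructing a subsolution on $\Omega$ directly via an Ansatz of the form $\frac{A}{|\log|t||^{\delta+1}}e^{-|x|^2/(4t)} - \frac{B}{|\log|t||^\delta}$ paralleling Theorem \ref{thm:petrowski}—is in principle also available, but the extra factor $|\log|t||^{1+\epsilon}$ appearing on the new boundary forces a much more delicate balancing of the three desiderata (subsolution property in $\Omega$, non-positivity on $\partial\Omega$, and positive interior limsup at the origin), which the reduction via the one-dimensional Petrovsky counter-example sidesteps cleanly.
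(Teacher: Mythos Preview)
Your reduction to the one-dimensional Petrovsky counterexample is a genuinely different route from the paper's. The paper carries out precisely the explicit construction you mention (and set aside) in your closing paragraph: it takes
\[
v(x,t)=-\frac{1}{|\log|t||^{1+\alpha}}\exp\Bigl(-\tfrac{k|x|^2}{4t}\Bigr)+\frac{1}{\log|\log|t||},
\]
with $k<1$ close to $1$ and $\alpha>0$ small, verifies by hand that this is a subsolution on a level-set subdomain $\tilde\Omega\subset\Omega$ sharing the origin, and then invokes Corollary~\ref{cor:irregular}. The extra parameter $k<1$ in the exponent is exactly what resolves the ``delicate balancing'' you anticipate; an Ansatz with exponent $-|x|^2/(4t)$ would not work. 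So the paper is self-contained but computational, while your route packages essentially the same computation into a black-box citation and then lifts radially---conceptually cleaner, though not materially shorter once the lift is properly justified.

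On that point, your verification at $x\neq 0$ is written too quickly. You invoke the pointwise identity $V_t-\Delta_\infty^N V=(v_1)_t-(v_1)_{rr}$, but $v_1$ was obtained only as an abstract viscosity subsolution of the $1$-D heat equation, not a $C^2$ function, so this identity is not available. The correct argument is: if $\phi\in C^2$ touches $V$ from above at $(x_0,t_0)$ with $x_0\neq 0$, the constancy of $V(\cdot,t_0)$ on the sphere $\{|x|=|x_0|\}$ forces $D\phi(x_0,t_0)$ to be parallel to $e_0:=x_0/|x_0|$; meanwhile $r\mapsto\phi(re_0,t)$ is a valid $1$-D test function for $v_1$ at $(|x_0|,t_0)$, giving $\phi_t\le\langle D^2\phi\,e_0,e_0\rangle$, which is the required inequality whether $D\phi$ vanishes or not. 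At $x=0$ the same restriction trick---to \emph{any} line through the origin, not a spherical average as you suggest---already yields $\phi_t\le\langle D^2\phi\,e,e\rangle\le\Lambda(D^2\phi)$. With these test-function arguments in place your proof goes through.
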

 \begin{proof}[Proof of Theorem \ref{thm:not regular}]
 The proof proceeds by constructing a domain $\tilde{\Omega}$ contained in $\Omega$, with the origin as common boundary point. We then show that $(0, 0)$ is irregular for $\tilde{\Omega}$, and Lemma \ref{cor:irregular} then implies that $(0, 0)$ regarded as a boundary point of $\Omega$ is irregular, too. 
 	
 We shall construct a smooth function $v$ so that
 \begin{enumerate}
 \item $ v_t(x, t)-\Delta_{\infty}^Nv(x, t)\leq0$ in $\tilde\Omega$,
 \item $v$ is continuous on the closure  of  $\tilde\Omega$ but not at the origin.
 \item The upper limit of $v$ at interior points converging to $(0, 0)$ is greater than its upper limit for the points converging to $(0,0)$ along the boundary of $\tilde \Omega$.
 \end{enumerate}
The existence of such a $v$ implies that the origin is irregular. Indeed, consider the boundary data $f: \partial \tilde\Omega \to \mathbb{R}$ defined as follows. Let $f=v$ near $(0, 0)$, and define 
\[
f(0, 0)=\lim_{\partial \tilde\Omega\ni(x, t)\to(0, 0)}v(x, t).
\]
As we shall see, this limit along the boundary exists.
 For the rest of the boundary, continuously extend $f$ to a large constant $c$. 

If $c$ is large enough, then the comparison principle implies that every function $w\in \mathcal{U}_f$ which satisfies $w\geq f$ on $\partial \tilde\Omega$ also satisfies $w\geq v$ in $\tilde\Omega$ since $v$ is a subsolution by (1). Taking the infimum over all such $w$, we see $\overline{H}_f\geq v$ in $\tilde\Omega$, and hence by point (3) in the definition of $v$;
\begin{align*}
\limsup_{\tilde\Omega\ni(x, t) \to (0, 0)}\overline{H}_f(x, t) \geq &\limsup_{\tilde\Omega\ni(x, t) \to (0, 0)} v(x, t) \\ 
>&\limsup_{\partial \tilde\Omega\ni(y, s) \to (0, 0)}v(y, s) =f(0, 0),
\end{align*} 
and so $(0, 0)$ is not a regular point for $\tilde \Omega$.

 We shall seek a function on the form
 \begin{equation}
 \label{eq:irregularitatsbarriare}
 v(x, t)=f(t)\mathrm{e}^{\frac{-|x|^2}{4t}k} +g(t),
 \end{equation}
 for suitable functions $f$ and $g$. Here $k$ and $\alpha$ are positive constants to be determined later, and $-1<t<0$. In fact, we shall choose $t$ to be very close to 0 and $1/2<k<1$. 
 
 {\bf The case $\bf{x\neq 0}$.} Then $Dv(x, t)\neq 0$, and it suffices to show that $v$ is a classical subsolution.
 Calculating, we get
 \begin{equation*}
v_t(x, t ) = f'(t)\mathrm{e}^{\frac{-|x|^2}{4t}k}+\frac{f(t)|x|^2k}{4t^2}\mathrm{e}^{\frac{-|x|^2}{4t}k}+g'(t),
 \end{equation*}
 and
 \begin{equation*}
 \Delta_{\infty}^Nv(x, t) = \frac{f(t)|x|^2k^2}{4t^2}\mathrm{e}^{\frac{-|x|^2}{4t}k}-\frac{f(t)k}{2t}\mathrm{e}^{\frac{-|x|^2}{4t}k}.
 \end{equation*}
 Collecting terms, this gives
 \begin{equation}
 \begin{aligned}
  \label{eq:bigstuff}
  & v_t(x, t)-\Delta_{\infty}^Nv(x, t) =   \\ 
  &\mathrm{e}^{\frac{-|x|^2}{4t}k} \left(\frac{f(t)|x|^2(k-k^2)}{4t^2}+f'(t)+\frac{f(t)k}{2t}+g'(t)\mathrm{e}^{\frac{|x|^2}{4t}k}\right).
 \end{aligned}
 \end{equation}
Choose
 \begin{equation*}
 f(t)=\frac{-1}{{|\log{|t|}|}^{1+\alpha}} \ \text{and} \ g(t)= \frac{1}{\log|\log{|t|}|},  
 \end{equation*}
 where $\alpha$ is an arbitrary, positive constant. We differentiate
 \begin{equation*}
 f'(t)=\frac{-(1+\alpha)}{t\cdot|\log{|t|}|^{2+\alpha}}
 \end{equation*}
 and
 \begin{equation*}
 g'(t)= \frac{1}{\log^2|\log{|t|}|\cdot|\log{|t|}|\cdot t}.
 \end{equation*}
 Inserting all this into \eqref{eq:bigstuff}, we end up with
\begin{equation}
\begin{split}
v_t(x, t)-\Delta_{\infty}^Nv(x, t) &= \mathrm{e}^{\frac{-|x|^2}{4t}k}\left(\frac{-|x|^2(k-k^2)}{4t^2|\log{|t|}|^{1+\alpha}}-\frac{1+\alpha}{t\cdot|\log{|t|}|^{2+\alpha}}\right.  \\ -\frac{k}{2t|\log{|t|}|^{1+\alpha}}  &+\left.\mathrm{e}^{\frac{|x|^2}{4t}k}\frac{1}{\log^2|\log{|t|}|\cdot|\log{|t|}|\cdot t}\right).
\end{split}
\end{equation}
 For this to be negative, the expression inside the parentheses has to be negative. Multiplying this by $-t\cdot|\log{|t|}|^{1+\alpha}>0$, we must have
 \begin{equation}
 \label{eq:subparabolicineq}
 \frac{|x|^2(k-k^2)}{4t}+\frac{1+\alpha}{|\log{|t|}|}+\frac{k}{2}- \mathrm{e}^{\frac{|x|^2}{4t}k}\frac{|\log{|t|}|^{\alpha}}{\log^2|\log{|t|}|}<0.
 \end{equation}
 Here, we can choose $t$ so close to 0 that $\log{|t|}<0$ and 
 \begin{equation*}
 \frac{\alpha+1}{|\log{|t|}|} <\frac{k}{2}, 
 \end{equation*}
 and it is enough that 
  \begin{equation}
  \label{eq:negative}
  \frac{|x|^2(k-k^2)}{4t}+k- \mathrm{e}^{\frac{|x|^2}{4t}k}\frac{|\log{|t|}|^{\alpha}}{\log^2|\log{|t|}|}<0.
  \end{equation}
  The expression \eqref{eq:negative} is negative if 
 \begin{equation}
 \label{eq:large}
\frac{|x|^2(k-k^2)}{4|t|} >k,
 \end{equation}
 or if
 \begin{equation}
 \label{eq:small}
 k\mathrm{e}^{-\frac{|x|^2}{4t}k} <\frac{|\log{|t|}|^{\alpha}}{\log^2|\log{|t|}|}.
 \end{equation}
It turns out that at least one of these inequalities must be true, if we choose $\delta_0$ so small that
 \begin{equation}
 \label{eq:choosing delta}
 k\mathrm{e}^{\frac{1}{1-k}} \leq \frac{|\log{|t|}|^{\alpha}}{\log^2|\log{|t|}|}, \ \mbox{when} \ -\delta_0<t<0.
 \end{equation}
 Indeed, if \eqref{eq:large} holds, we are done. 

 On the other hand, if \eqref{eq:large} does \emph{not} hold, we have
 \[
  -\frac{|x|^2}{4t}<\frac{1}{1-k}.
 \]
This implies
\[
k\mathrm{e}^{-\frac{|x|^2}{4t}k}<k\mathrm{e}^{\frac{k}{1-k}},
\]
and so \eqref{eq:small} follows from \eqref{eq:choosing delta}.
At least one of the inequalities \eqref{eq:large} or \eqref{eq:small} is satisfied. This concludes the verification when $x \neq 0$.

{\bf The case $\bf{x= 0}$.} According to Definition \ref{def:viscositydefinition} we need to check that for every $\phi \in C^2(\Omega)$ such that $v-\phi$ has a maximum at $(0, t)$, we have
\begin{equation}
\label{eq:subparabolic}
\phi_t\leq \Lambda (D^2\phi(0, t)).
\end{equation}
We show that $v$ itself satisfies this condition. An argument similar to the one in the proof of Theorem \ref{thm:petrowski} shows that $v$ is then a viscosity subsolution. 

Substituting the derivatives, we se that at $(0, t)$ equation \eqref{eq:subparabolic} reads
\[
f'(t)+g(t)\leq -\frac{f(t)}{2t}k,
\]
or
\[
-\frac{1+\alpha}{t\cdot|\log{|t|}|^{2+\alpha}}  +\frac{1}{\log^2|\log{|t|}|\cdot|\log{|t|}|\cdot t} \leq \frac{k}{2t|\log{|t|}|^{1+\alpha}}. 
\]
This is equivalent to 
\[
\frac{1+\alpha}{|\log{|t|}|}-\frac{|\log{|t|}|^{\alpha}}{\log^2|\log{|t|}|}\leq-\frac{k}{2},
\]
but this inequality is the limit of \eqref{eq:subparabolicineq} as $x \to 0$, and it can be verified in a similar way that $v$ satisfies \eqref{eq:subparabolic}, and is a subsolution even in this case.
 
 Now we consider the level set $v(x, t)=c$, $c<0$, and calculate
 \begin{align*}
 v(x, t)&=\frac{-1}{|\log{|t|}|^{\alpha+1}}\mathrm{e}^{-\frac{|x|^2}{4t}k}+\frac{1}{\log|\log{|t|}|} =c \\
  &\iff \frac{-1}{|\log{|t|}|^{\alpha+1}}\mathrm{e}^{-\frac{|x|^2}{4t}k} =c-\frac{1}{\log|\log{|t|}|} \\
&\iff \mathrm{e}^{-\frac{|x|^2}{4t}k} =|\log{|t|}|^{\alpha+1}\left(\frac{1}{\log|\log{|t|}|}-c\right) \\
 &\iff -\frac{|x|^2}{4t}k = (\alpha+1)\log|\log{|t|}|+\log\left(\frac{1}{\log|\log{|t|}|}-c\right), 
 \end{align*}
 or simply
 \begin{equation}
 \label{eq:smalldomain}
  |x|^2 =-4t\left(\frac{\alpha+1}{k}\log|\log{|t|}|+\frac{1}{k}\log\left(\frac{1}{\log|\log{|t|}|}-c\right)\right).
 \end{equation}
 Letting $\tilde{\Omega}$ denote the domain enclosed by \eqref{eq:smalldomain} and the hyperplane $t=t_0$, we have that for $c<0$, the function $v$ \eqref{eq:irregularitatsbarriare} is equal to $c$ on the curved portion of the boundary of $\tilde{\Omega}$. Also, $v$ tends to $0$ at the points converging to the origin along the $t$-axis . This shows that the origin is an irregular boundary point for $\tilde{\Omega}$.
 
 The inclusion $\tilde \Omega \subset \Omega$ requires that
 \[
 \frac{\alpha+1}{k}\log|\log{|t|}|+\frac{1}{k}\log\left(\frac{1}{\log|\log{|t|}|}-c\right) <(1+\epsilon)\log|\log|t||
 \]
 for small $|t|$. Fix $k$ close to 1 and $\alpha$ close to 0 so that
 \[
 \frac{\alpha+1}{k}<1+\frac{\epsilon}{2}.
 \]
 With this choice, $\delta_0$ in \eqref{eq:choosing delta} depends only on $\epsilon$. Thus we have to verify that
 \[
 \left(\frac{1}{\log|\log{|t|}|}+|c|\right)^{\frac{1}{k}} \leq |\log|t||^{\frac{\epsilon}{2}},
 \] 
 but this obviously holds for small $|t|$ since the left-hand side is bounded.
 Hence $\tilde{\Omega} \subset \Omega$ and $(0,0)$ is an irregular boundary point for $\Omega$ as well.
 
 This implies that the upper Perron solution $\overline{H}_f$ does not always attain the boundary values at $(0, 0)$.
 \end{proof}

 \appendix
\section{Barrier Families}
\label{sec:appa}
\noindent In this Section, a sub/supersolution is a viscosity sub/supersolution of 
\begin{equation}
\label{eq:infinity_heat}
u_t -\Delta_{\infty}u = u_t -\sum_{i, j=1}^{n}u_{x_i}u_{x_ix_j}u_{x_j} =0,
\end{equation}
the usual, non-normalised $\infty$- heat equation. Further, the upper and lower classes $\mathcal{U}_f$ and $\mathcal{L}_f$, and the Upper and Lower Perron solutions $\overline{H}_f$ and $\underline{H}_f$, now also refer to \eqref{eq:infinity_heat}. As before, the boundary function $f$ is assumed to be continuous.

Note that if $u$ is a radial function $u(x, t)=G(|x|, t)$, then \eqref{eq:infinity_heat} reads
\begin{equation*}
G_t(r, t)-G_r(r, t)^2G_{rr}(r, t)=0.
\end{equation*}
Also, if $u$ is a sub/supersolution to \eqref{eq:infinity_heat}, then $-u$ is a \\ super/subsolution.

The following definition is found in \cite{Bjorn2015}:
\begin{definition}
\label{def:stronk_barrier}
Let $\zeta_0 =(x_0, t_0) \in \partial \Omega$. A family of functions $\{w_j\}_j $ in $\Omega$ is called a \emph{barrier family} in $\Omega$ at the point $\zeta_0$ if, for every $j$,
\begin{enumerate}
\item $w_j>0$ and $w_j$ a supersolution in $\Omega$,
\item $\lim_{\zeta \to \zeta_0}w_j(\zeta)=0$,
\item for every $k=1, 2, \cdots,$ there is a $j$ so that
\begin{equation*}
\liminf_{\eta \to \zeta}w_j(\eta)\geq k
\end{equation*}
for all $\zeta \in \partial\Omega$ with $|\zeta-\zeta_0|\geq 1/k$.
\end{enumerate}
We also say that the family $\{w_j\}$ is a \emph{strong barrier family} in $\Omega$ if we also have that
\begin{enumerate}
	  \setcounter{enumi}{3}
\item $w_j$ is continuous in $\Omega$ and
\item there is a non-negative, continuous function $d$ with $d(z)=0$ if and only if $z=\zeta_0$, such that for every $k=1, 2, \cdots$ there is a $j=j(k)$ such that 
\begin{equation*}
w_j \geq kd
\end{equation*}
in $\Omega$.
\end{enumerate}
\end{definition}
Then we have the following result:
\begin{theorem}
The following are equivalent for $\zeta_0 \in \partial \Omega$:
\begin{enumerate}
\item $\zeta_0$ is regular for \eqref{eq:infinity_heat},
\item there is a barrier family at $\zeta_0$,
\item there is a \emph{strong} barrier family at $\zeta_0$.
\end{enumerate}
\end{theorem}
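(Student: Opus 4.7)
The proof proceeds via the cycle $(3)\Rightarrow(2)\Rightarrow(1)\Rightarrow(3)$. The first step is immediate, since every strong barrier family is \emph{a fortiori} a barrier family.

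For $(2)\Rightarrow(1)$, I mirror the proof of Theorem \ref{thm:barrier} with one essential modification. Equation \eqref{eq:infinity_heat} is not one-homogeneous in $u$: a quick calculation gives $(Mw)_t-\Delta_\infty(Mw)=Mw_t-M^3\Delta_\infty w$, so multiplying a single supersolution by a large constant $M$ is not allowed, whereas \emph{adding} a constant \textbf{is} (the equation has no explicit $u$-dependence). The family structure in condition (3) of Definition \ref{def:stronk_barrier} is exactly the substitute for the unavailable scaling. Given continuous $f$ on $\partial\Omega$ and $\epsilon>0$, choose $\delta>0$ by continuity of $f$ and an integer $k\geq\max(2\|f\|_\infty,1/\delta)$; then (3) furnishes an index $j$ with $\liminf_{\eta\to\zeta}w_j(\eta)\geq k$ for every $\zeta\in\partial\Omega$ with $|\zeta-\zeta_0|\geq 1/k$. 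One checks directly that $w_j+f(\zeta_0)+\epsilon\in\mathcal{U}_f$ and $-w_j+f(\zeta_0)-\epsilon\in\mathcal{L}_f$ on both pieces of $\partial\Omega$, sandwiches $\underline{H}_f(\eta)$ and $\overline{H}_f(\eta)$ between them, and passes $\eta\to\zeta_0$ using $w_j\to 0$.

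For $(1)\Rightarrow(3)$, assume $\zeta_0$ is regular. The analog of Theorem \ref{thm:future} for \eqref{eq:infinity_heat} (whose proof is the same) lets us assume $\Omega\subset\{t\leq t_0\}$. Put $\phi(\zeta):=|\zeta-\zeta_0|$ and $w_j:=\overline{H}_{j\phi}$ via the Perron method for \eqref{eq:infinity_heat}. The analog of Section \ref{sec:perron=viscosity} shows that each $w_j$ is a continuous viscosity solution; comparison with the trivial subsolution $0\in\mathcal{L}_{j\phi}$ gives $w_j\geq 0$, strong comparison gives $w_j>0$ off $\zeta_0$, and regularity of $\zeta_0$ gives $\lim_{\eta\to\zeta_0}w_j(\eta)=j\phi(\zeta_0)=0$, verifying (1), (2), (4). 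For condition (3) of Definition \ref{def:stronk_barrier}, invoke the standard Perron estimate $\liminf_{\eta\to\zeta}\overline{H}_{j\phi}(\eta)\geq j\phi(\zeta)$ on $\partial\Omega$ (obtained by observing that the LSC envelope of the pointwise infimum of countable subfamilies of $\mathcal{U}_{j\phi}$ remains in $\mathcal{U}_{j\phi}$); combined with $\phi\geq m_k>0$ on $\partial\Omega\setminus B_{1/k}(\zeta_0)$, choosing $j(k)\geq k/m_k$ yields $\liminf w_{j(k)}\geq k$ there.

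The main obstacle is condition (5), and this is where the non-homogeneity must be turned from enemy into friend. Define
\[
d(x,t):=|x-x_0|^\beta+\epsilon(t_0-t)
\]
on $\overline\Omega\subset\{t\leq t_0\}$, with $\beta>1$ and $\epsilon>0$; then $d$ is continuous, non-negative, and vanishes only at $\zeta_0$. A calculation gives $d_t=-\epsilon\leq 0$ and $\Delta_\infty d=\beta^3(\beta-1)|x-x_0|^{3\beta-4}\geq 0$ (the viscosity check at $x=x_0$ is the same as in the proof of Theorem \ref{thm:exterior sphere}), so $d$ is a subsolution of \eqref{eq:infinity_heat}. Crucially, because of the sign pattern $d_t\leq 0$, $\Delta_\infty d\geq 0$, the scaled function $kd$ satisfies $(kd)_t-\Delta_\infty(kd)=kd_t-k^3\Delta_\infty d\leq 0$ and therefore remains a subsolution for every $k\geq 1$. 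A direct estimate gives $d\leq M\phi$ on $\overline\Omega$ with $M=\mathrm{diam}(\Omega)^{\beta-1}+\epsilon$, so for $j=j(k):=\lceil kM\rceil$ one has $j\phi\geq kd$ on $\partial\Omega$; then $kd\in\mathcal{L}_{j\phi}$ and the comparison principle yields $w_j\geq kd$ in $\Omega$. The whole argument rests on the observation that the very non-homogeneity of \eqref{eq:infinity_heat} that forces the use of a family of barriers at $\zeta_0$ simultaneously permits certain explicit subsolutions — those satisfying $u_t\leq 0$ and $\Delta_\infty u\geq 0$ — to be scaled by arbitrary positive constants, which is exactly the structure needed to produce the single function $d$ demanded by (5).
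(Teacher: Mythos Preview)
Your argument for $(2)\Rightarrow(1)$ matches the paper's and is fine. The problem is in $(1)\Rightarrow(3)$: the reduction ``assume $\Omega\subset\{t\leq t_0\}$'' is not justified. Theorem~\ref{thm:future} (even granting its analogue for \eqref{eq:infinity_heat}, whose proof does \emph{not} carry over verbatim since the auxiliary function $|x-x_0|^2+\epsilon(t-t_0)^2$ used there is not a subsolution of the non-normalised equation) only says that $\zeta_0$ is regular for $\Omega$ iff it is regular for $\Omega_-$. It says nothing about transferring a \emph{strong barrier family} from $\Omega_-$ back to $\Omega$, and that step is missing. Your function $d(x,t)=|x-x_0|^\beta+\epsilon(t_0-t)$ is only non-negative and only satisfies $d_t\leq 0$ when $t\leq t_0$, so the whole construction of condition~(5) collapses outside $\Omega_-$. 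The ``standard Perron estimate'' $\liminf_{\eta\to\zeta}\overline{H}_{j\phi}(\eta)\geq j\phi(\zeta)$ you invoke for condition~(3) is also not established in this setting (it would amount to regularity from one side at every boundary point), though this particular issue can be bypassed by deducing~(3) from~(5) after re-indexing.

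The paper avoids all of this by constructing the subsolutions directly on $\Omega$: take
\[
\Psi_j(x,t)=j\,|x-x_0|^{4/3}+\beta j^3(t-t_0)^2,
\]
check that $\partial_t\Psi_j-\Delta_\infty\Psi_j\leq 0$ for suitable $\alpha,\beta$ (the cubic homogeneity of $\Delta_\infty$ produces a $j^3$ that dominates the time derivative), and set $w_j=\underline{H}_{\Psi_j}$. Then $\Psi_j\in\mathcal{L}_{\Psi_j}$ gives $w_j\geq\Psi_j\geq j\,d$ with $d(x,t)=|x-x_0|^{4/3}+\beta(t-t_0)^2$, so positivity, condition~(3) and condition~(5) are all immediate, with no reduction needed. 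Your scaling observation is essentially the same phenomenon viewed differently, and it can be rescued without the reduction: with the paper's $d$ one has $(kd)_t-\Delta_\infty(kd)=2\beta k(t-t_0)-\tfrac{64}{81}k^3\leq 0$ for all sufficiently large $k$, which is enough for~(5). But as written, your proof depends on a reduction you have not carried out.
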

\begin{proof}
$(2) \implies (1).$

Assume that there exists a barrier family $\{w_j\}_j$ at $\zeta_0$. Since $f$ is continuous, we have that for every $\epsilon>0$ there exists a $\delta$ so that \\ $|f(\zeta)-f(\zeta_0)|<\epsilon$ whenever $|\zeta-\zeta_0|<\delta$. Using point (3) in the definition, we can thus find a large $j$ so that
\begin{equation*}
\liminf_{\eta \to \zeta}w_j(\eta) +\epsilon+f(\zeta_0) > f(\zeta)
\end{equation*}
for every $\zeta \in \partial \Omega.$ Also considering point (1) in the definition, we see that $w_j+\epsilon +f(\zeta_0) \in \mathcal{U}_f$. Hence
\begin{equation*}
\limsup_{\zeta \to \zeta_0}\overline{H}_f(\zeta) \leq \lim_{\zeta \to\zeta_0}w_j(\zeta)+\epsilon+f(\zeta_0) =\epsilon +f(\zeta_0),
\end{equation*}
by point (2) and the definition of the Upper Perron solution.

Noting that $-w_j$ is a subsolution, we can choose $j$ large enough so that
\begin{equation*}
\limsup_{\zeta \to \zeta_0}(-w_j)-\epsilon +f(\zeta_0) <f(\zeta), 
\end{equation*}
and hence $-w_j-\epsilon +f(\zeta_0)$ is in the Lower class, and
\begin{equation*}
\liminf_{\zeta \to \zeta_0}\overline{H}_f(\zeta) \geq \liminf_{\zeta \to \zeta_0}\underline{H}_f(\zeta) \geq -\epsilon +f(\zeta_0).
\end{equation*}

$(1) \implies (3)$. Assume that $\zeta_0$ is regular. We shall construct a \emph{strong} barrier family at $\zeta_0$. Define 
\begin{equation}
\label{eq:stronk_barrier}
\Psi_j(x, t) =j\alpha|x-x_0|^{\frac{4}{3}}+\beta j^m (t-t_0)^2,
\end{equation}
where $\alpha, \beta $ and $m$ are positive constants to be determined later. We show that $\Psi_j$ is a classical subsolution. 
\begin{align*}
\frac{\partial \Psi_j}{\partial t}-\Delta_{\infty}\Psi_j &= 2\beta j^m(t-t_0)-\frac{64j^3\alpha^3}{27} \\
&\leq 2\beta j^m \text{diam}(\Omega)-\frac{64j^3\alpha^3}{27}.
\end{align*}
Choosing, for example, $\beta =\frac{1}{2\text{diam}(\Omega)}$, $\alpha=1$ and $m=3$, we get
\begin{equation*}
\frac{\partial \Psi_j}{\partial t}-\Delta_{\infty}\Psi_j \leq j^3(1-\frac{64}{27})=-\frac{37}{27}j^3 <0,
\end{equation*}
and $\Psi_j$ is a subsolution.
Further, choosing
\begin{equation*}
d(x, t) =|x-x_0|^{\frac{4}{3}}+\frac{1}{2\text{diam}(\Omega)}(t-t_0)^2, 
\end{equation*}
we see that $\Psi_j(x, t)\geq j^3d(x, t)$.

Now, setting $w_j=\underline{H}_{\Psi_j}$, we have that $\{w_j\}_j$ is a strong barrier family at $(x_0, t_0)$. Indeed, (2) in Definition \ref{def:stronk_barrier} follows since we assumed $(x_0, t_0)$ is regular, and by the definition of the Lower Perron solution we have that 
\begin{equation*}
w_j \geq \Psi_j \geq j^3d,
\end{equation*}
and so (5) holds. Hence $\{w_j\}_j$ is a strong barrier family. 

$(3) \implies (2)$ is trivial.
\end{proof}
\section*{Acknowledgements}
\noindent The author would like to thank Peter Lindqvist for his help and guidance, as well as Bernd Kawohl for helpful input and several suggestions. Jana Björn and Vesa Julin are thanked for discovering a flaw in the proof of the Petrovsky criterion, and  the anonymous referee is thanked for his constructive feedback.
\bibliographystyle{alpha}
\bibliography{/Users/nikubo74/Desktop/PhD/refs} 

	\end{document}